\newtheorem{thm}{Theorem}[section]
\newtheorem{lemma}[thm]{Lemma}
\newtheorem{prop}[thm]{Proposition}
\newcommand{\R}{{\mathbb{R}}}
\newcommand{\Z}{{\mathbb{Z}}}
\newcommand{\bs}{\backslash}
\newcommand{\3}{\varepsilon}
\newcommand{\4}{\widetilde}
\begin{document}

\title{Singular limit of the generalized Burgers equation with absorption}

\author[Kin Ming Hui]{Kin Ming Hui}
\address{Kin Ming Hui:
Institute of Mathematics, Academia Sinica,\\
Taipei, 10617, Taiwan, R.O.C.}
\email{kmhui@gate.sinica.edu.tw}

\author[Sunghoon Kim]{Sunghoon Kim}
\address{Sunghoon Kim:
Department of Mathematics, School of Natural Sciences, The Catholic University of Korea,
43 Jibong-ro, Wonmi-gu, Bucheon-si, Gyeonggi-do, 420-743, Republic of Korea}
\email{math.s.kim@catholic.ac.kr}

\keywords{singular limit, generalized Burgers equation with absorption}
\date{Jan 7, 2015}
\subjclass[2010]{Primary 35B40 Secondary 35F20, 35L02}

\begin{abstract}
We prove the convergence of the solutions $u_{m,p}$ of the equation $u_t+(u^m)_x=-u^p$ in $\R\times (0,\infty)$, $u(x,0)=u_0(x)\ge 0$ in $\R$,  as $m\to\infty$ for any $p>1$ and $u_0\in L^1(\R)\cap L^{\infty}(\R)$ or as $p\to\infty$ for any $m>1$ and $u_0\in L^{\infty}(\R)$ . We also show that in general $\underset{p\to\infty}\lim\underset{m\to\infty}\lim u_{m,p}\ne\underset{m\to\infty}\lim\underset{p\to\infty}\lim u_{m,p}$. 

\end{abstract}

\maketitle
\vskip 0.2truein

\setcounter{equation}{0}
\setcounter{section}{0}

\section{Introduction}\label{section-intro}
\setcounter{equation}{0}
\setcounter{thm}{0}

Recently there is a lot of studies on the singular limit of solutions of partial differential equations. 
Singular limit of solutions of the porous medium equation,
\begin{equation}
\left\{\begin{aligned}
u_t=&\Delta u^m\qquad\mbox{ in }\R^n\times (0,T)\\
u(x,0)=&u_0\ge 0\quad\mbox{ in }\R^n
\end{aligned}\right.
\end{equation}
as $m\to\infty$ is proved by L.A.~Caffarelli and A.~Friedman in \cite{CF} when $u_0$ satisfies some appropriate conditions. Later P.~B\'enilan, L.~Boccardo and M.~Herrero \cite{BBH} and P.E.~Sacks \cite{S} extended this result to more general initial value $0\le u_0\in L^1(\R^n)$. Singular limits of the solutions of the porous medium equation with absorption or drift term were proved by K.M.~Hui in \cite{H1}, \cite{H2} and \cite{H3}. Singular limit as $p\to\infty$ of the solutions of the one dimensional nonlinear wave equation
\begin{equation}
\phi_{tt}-\phi_{xx}=-|\phi|^{p-1}\phi
\end{equation}
with initial data $\phi(x,0)=\phi_0(x)$, $\phi_t(x,0)=\phi_1(x)$, was proved by T.~Tao in \cite{T}. Singular limit of solutions of the hyperbolic equation
\begin{equation}\label{burgers-eqn}
\left\{\begin{aligned}
u_t+(u^m)_x=&0\qquad\quad\,\,\mbox{ in }\R\times (0,\infty)\\
u(x,0)=&u_0\ge 0\quad\mbox{ in }\R
\end{aligned}\right.
\end{equation}
as $m\to\infty$ was proved by X.~Xu in \cite{X}. Recently B.~Perthame, F.~Quiros and J.L.~Vazquez \cite{PQV} 
proved the singular limit of solutions of the following system of equations, which arises in the Hele-Shaw models of tumor growth \cite{P}, \cite{PTV},
\begin{equation*}
\left\{\begin{aligned}
&\rho_t+\mbox{div}\,(\rho\nabla p)=\rho\Phi(p,c)\\
&c_t-\Delta c=-\rho\Psi (p,c)\\
&c(x,t)\to c_B>0\quad\mbox{ as }|x|\to\infty,
\end{aligned}\right.
\end{equation*}
as $m\to\infty$, where $p=k\rho^{m-1}$ for some constant $k>0$ and $\Phi$, $\Psi$, are smooth functions that satisfy some structural conditions. 

In this paper we will study the singular limit of solutions $u_{m,p}$ of the generalized Burgers equation with absorption,
\begin{equation}\label{burgers-absorption-eqn}
\left\{\begin{aligned}
&u_t+(u^m)_x=-u^p \qquad  \mbox{in }\R\times(0,\infty)\\
&u(x,0)=u_0(x)\ge 0 \quad \mbox{ in }\R
\end{aligned}\right.
\end{equation}
when either $m\to\infty$ or $p\to\infty$. We will prove that under some mild conditions on the initial data $u_0$, as $m\to\infty$ or $p\to\infty$, the singular limit of solutions of \eqref{burgers-absorption-eqn} exists. 

More precisely we will prove the following three results.

\begin{thm}\label{main-thm1}
Let $0\le u_0\in L^1(\R)\cap L^{\infty}(\R)$. For any $p>1$, $m>1$, let $u_{m,p}$ be the solution of \eqref{burgers-absorption-eqn} in $\R\times(0,\infty)$ given by Lemma \ref{thm-existence-of-general-solution}. Then as $m\to\infty$, $u_{m,p}$ converges in $C\left([t_0,T];L_{loc}^1(\R)\right)$ for any $T>t_0>0$ to some function $u_{\infty ,p}$, $0\le u_{\infty,p}\le 1$, which satisfies
\begin{equation}\label{p-power-ode}
u_t=-u^p \quad \mbox{ in }\,\mathcal{D}'(\R\times(0,\infty))
\end{equation}
with initial value $u_{\infty}^0(x)$, $0\leq u_{\infty}^0\leq 1$, that satisfies
\begin{equation}\label{p-limit-initial-value}
u_{\infty}^0(x)+\psi_x(x)=u_0(x)  \quad \mbox{ in }\mathcal{D}'(\R)
\end{equation}
for some function $0\le\psi\in L^1(\R)\cap L^{\infty}(\R)$ satisfying 
\begin{equation}\label{psi=0-eqn}
\psi(x)=0\quad\mbox{ a.e. }x\in \{x\in\R:u_{\infty}^0(x)<1\}.
\end{equation}
\end{thm}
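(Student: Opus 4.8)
The plan is to construct the limit $u_{\infty,p}$ by a compactness argument based on three families of a priori bounds that are uniform in $m$, and then to identify both the limit equation and the structure of its initial trace. Throughout I write $U_m(x,t)=\int_{-\infty}^x u_{m,p}(y,t)\,dy$ for the spatial primitive. First I would record the uniform estimates supplied by the existence theory of Lemma \ref{thm-existence-of-general-solution}. The maximum principle together with comparison with the spatially homogeneous solution of $w_t=-w^p$ gives $0\le u_{m,p}(x,t)\le\|u_0\|_{\infty}$, while integrating \eqref{burgers-absorption-eqn} in $x$ yields the mass bound $\|u_{m,p}(\cdot,t)\|_1\le\|u_0\|_1$. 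The decisive estimate is the flux bound: integrating the identity $\1_t U_m=-u_{m,p}^m-\int_{-\infty}^x u_{m,p}^p\,dy$ in $t$ gives
\[
\int_0^T u_{m,p}^m(x,t)\,dt\le U_m(x,0)\le\|u_0\|_1\qquad\text{for a.e. }x,
\]
so $u_{m,p}^m$ is bounded in $L^1_{loc}(\R\times[0,\infty))$ uniformly in $m$. Finally, since $-u^p$ is nonincreasing in $u$, the $L^1$-contraction property applied to $u_{m,p}(\cdot+h,\cdot)$ and $u_{m,p}$ gives the translation estimate $\|u_{m,p}(\cdot+h,t)-u_{m,p}(\cdot,t)\|_1\le\|u_0(\cdot+h)-u_0\|_1$, a modulus of continuity in space independent of both $m$ and $t$.

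Next I would extract the limit. The $L^\infty$ and $L^1$ bounds together with the uniform spatial modulus of continuity give, by the Kolmogorov--Riesz criterion, compactness of $\{u_{m,p}(\cdot,t)\}_m$ in $L^1_{loc}(\R)$ for each fixed $t$. To upgrade this to compactness in $C([t_0,T];L^1_{loc}(\R))$ I would establish equicontinuity in $t$ by testing the equation against mollified test functions and controlling the flux increment $\int_t^{t+s}\int_K u_{m,p}^m$. The key point is that for $t\ge t_0>0$ this increment tends to $0$ as $s\to0$ uniformly in $m$, because the flux can only concentrate during the initial layer; this is precisely why convergence holds on $[t_0,T]$ but not up to $t=0$. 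A subsequence argument then yields $u_{\infty,p}$ with $u_{m_k,p}\to u_{\infty,p}$ in $C([t_0,T];L^1_{loc}(\R))$ and a.e.

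With the limit in hand I would identify it. The bound $0\le u_{\infty,p}\le1$ follows from the flux bound by a measure argument: for any $\delta>0$,
\[
(1+\delta)^m\,\meas\{(x,t)\in K\times(0,T):u_{m,p}>1+\delta\}\le\iint_{K\times(0,T)}u_{m,p}^m\le|K|\,\|u_0\|_1,
\]
so the set where $u_{m,p}>1+\delta$ has measure tending to $0$. To pass to the limit in the weak form, on the region $\{u_{\infty,p}<1\}$ one has $u_{m,p}^m\to0$ in $L^1_{loc}$ (there $u_{m,p}$ is eventually bounded away from $1$), whence the flux contribution $\iint u_{m,p}^m\vp_x\to0$ and the equation reduces to \eqref{p-power-ode}. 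Since solutions of $w_t=-w^p$ issued from data $\le1$ drop strictly below $1$ for every $t>0$, the set $\{u_{\infty,p}=1\}\cap\{t>0\}$ is $t$-null, so \eqref{p-power-ode} holds on all of $\R\times(0,\infty)$.

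Finally I would analyze the initial trace. Passing to the limit in the integrated identity $U_m(x,t)=\int_{-\infty}^x u_0-\int_0^t u_{m,p}^m-\int_0^t\int_{-\infty}^x u_{m,p}^p$ and then letting $t\to0^+$ gives
\[
\int_{-\infty}^x\big(u_0-u_{\infty}^0\big)\,dy=\psi(x),\qquad \psi(x):=\lim_{t\to0^+}\lim_{k\to\infty}\int_0^t u_{m_k,p}^{m_k}(x,\tau)\,d\tau,
\]
which upon differentiation is exactly \eqref{p-limit-initial-value}; the flux bound gives $0\le\psi\le\|u_0\|_1$, and $\psi\in L^1(\R)\cap L^{\infty}(\R)$ follows from the uniform bounds. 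The complementarity \eqref{psi=0-eqn} then holds because at any point with $u_\infty^0(x)<1$ the solution stays below $1$ near $t=0$, so the concentrated flux $\psi(x)$ vanishes there. I expect the main obstacle to be the time-equicontinuity for $t\ge t_0$ and the equivalent statement that the flux $u_{m,p}^m$ carries no mass in the interior $\{t>0\}$ in the limit: because $u^m$ has Lipschitz constant $m\,u^{m-1}$ blowing up with $m$, the usual $BV$-based compactness for conservation laws is unavailable, and the argument must instead be driven entirely by the $L^1$-contraction (for the spatial variable) and the flux bound together with the obstacle structure $u_{\infty,p}\le1$ (for the temporal variable and for identifying the limit).
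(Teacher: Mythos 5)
Your proposal has the right overall architecture (compactness, flux concentration at $t=0$, complementarity), but it omits the one estimate that makes the whole argument run, and two of your key steps are assertions rather than proofs. The temporal half of your compactness claim rests on the statement that for $t\ge t_0>0$ the flux increment $\int_t^{t+s}\int_K u_{m,p}^m\,dx\,d\tau$ tends to $0$ as $s\to0$ uniformly in $m$, ``because the flux can only concentrate during the initial layer.'' That is exactly what has to be proved, and your flux bound $\int_0^T u_{m,p}^m(x,t)\,dt\le\|u_0\|_{L^1(\R)}$ cannot deliver it: an $L^1$-in-time bound is perfectly compatible with concentration at an interior time, and nothing in your setup excludes this. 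The paper excludes it with a pointwise decay estimate you never derive: by Lemma \ref{thm-existence-of-general-solution} one has $u_{m,p}\le v_m$, where $v_m$ solves the pure conservation law, and Xu's estimate (p.~64 of \cite{X}) gives $u_{m,p}(x,t)^m\le v_m(x,t)^m\le 2\|u_0\|_{L^1(\R)}/((m-1)t)$, i.e. \eqref{eq-upper-bound-of-u-m-by-v-m-the-solution-of-homogeneous-case}. This single estimate does everything your proposal leaves open: $u_{m,p}^m\to0$ uniformly on $\R\times[t_0,\infty)$, hence (a) $m\|u_{m,p}\|^{m-1}_{L^{\infty}(\R\times(t_0,\infty))}$ is bounded uniformly in $m$, so the modulus in Lemma \ref{lemma-L-1-contraction-of-soutions} can be chosen independent of $m$ and time-equicontinuity on $[t_0,T]$ follows; (b) the flux term $\iint u_{m,p}^m\eta_x$ vanishes in the limit for every test function supported in $\{t>0\}$, so \eqref{p-power-ode} holds on all of $\R\times(0,\infty)$ directly --- note that your substitute argument here is both circular (you invoke the decay of solutions of $w_t=-w^p$ to show $\{u_{\infty,p}=1\}$ is $t$-null before knowing that $u_{\infty,p}$ solves that equation) and incomplete (the Egorov step needs equi-integrability of $u_{m,p}^m$, which genuinely fails near $t=0$ and is not implied by the flux bound); and (c) $0\le u_{\infty,p}\le1$ is immediate. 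Your spatial part, by contrast, is fine --- the translation estimate via $L^1$-contraction plus the dissipativity of $-u^p$ is cleaner than the paper's route and is uniform down to $t=0$.

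The initial-trace analysis has two further gaps. First, the existence of your double limit $\psi(x)=\lim_{t\to0^+}\lim_{k\to\infty}\int_0^t u_{m_k,p}^{m_k}(x,\tau)\,d\tau$ presupposes compactness of $\psi_m(x,t)=\int_0^t u_m^m\,d\tau$ in $C([0,T];L^1(\R))$ and time-independence of its limit; these are the paper's Lemma \ref{lem-precompactness-of-int-u-sub-m-to-m-over-o-to-t} and Proposition \ref{eq-proposition-independent-of-t-in-psi}, proved by mollifying the weak formulation and, again, by the decay estimate --- they cannot simply be built into a definition. Second, and more seriously, your justification of \eqref{psi=0-eqn} (``at any point with $u_{\infty}^0(x)<1$ the solution stays below $1$ near $t=0$, so $\psi(x)$ vanishes there'') is not an argument: $\psi(x)$ records the mass transported past $x$ during the initial layer, and smallness of the limiting trace at $x$ does not preclude transport through $x$. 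The paper's proof is global: from \eqref{p-limit-initial-value} and $\psi\in L^1(\R)$ one gets conservation $\int_{\R}u_{\infty}^0=\int_{\R}u_0$; the comparison $u_{m,p}\le v_m$ gives $u_{\infty}^0\le v_{\infty}$ and $\psi\le\4{\psi}$, where $v_{\infty},\4{\psi}$ are Xu's limits for the flux-only equation satisfying $\int_{\R}v_{\infty}=\int_{\R}u_0$ and $\4{\psi}=0$ a.e. on $\{v_{\infty}<1\}$; equality of the integrals then forces $u_{\infty}^0=v_{\infty}$ a.e., and the complementarity transfers to $\psi$. Some such global argument is unavoidable. Finally, the theorem claims convergence of the full family as $m\to\infty$, not merely of a subsequence; this requires the uniqueness of the pair $(u_{\infty}^0,\psi)$ determined by \eqref{p-limit-initial-value} and \eqref{psi=0-eqn} (the paper cites p.~70 of \cite{X}), a step entirely absent from your proposal.
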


\begin{thm}\label{main-thm2}
Let $0\le u_0\in L^{\infty}(\R)$. For any $p>1$, $m>1$, let $u_{m,p}$ be the solution of \eqref{burgers-absorption-eqn} in $\R\times(0,\infty)$ given by Lemma \ref{thm-existence-of-general-solution}. Then as $p\to\infty$, $u_{m,p}$ converges in $C([t_0,T];L^1_{loc}(\R))$  for any $T>t_0>0$ to the solution $u_{m,\infty}$ of the equation,
\begin{equation}\label{burgers-eqn}
\left\{\begin{aligned}
&u_t+(u^m)_x=0 \qquad\qquad\qquad\, \mbox{ in }\R\times(0,\infty)\\
&u(x,0)=\min (u_0(x),1)\qquad \quad\mbox{ in }\R.
\end{aligned}\right.
\end{equation}
\end{thm}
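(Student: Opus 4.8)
The plan is to exploit the elementary fact that $s\mapsto s^p$ concentrates the absorption on $\{u>1\}$: as $p\to\infty$ the penalty $u^p$ diverges where $u>1$ and vanishes where $u<1$, so the absorption should cap the solution at the level $1$ and replace the datum by $\min(u_0,1)$. I would make this rigorous in four steps: uniform bounds together with a decaying spatially homogeneous barrier; compactness; passage to the limit in the interior $t\ge t_0$; and identification of the initial trace.

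First I would collect the $p$-independent estimates furnished by Lemma \ref{thm-existence-of-general-solution}. Since $-u^p\le 0$, the constant $\|u_0\|_{L^\infty}$ is a supersolution, so $0\le u_{m,p}\le\|u_0\|_{L^\infty}$ uniformly. More importantly, the spatially homogeneous function $w_p(t)=(\|u_0\|_{L^\infty}^{1-p}+(p-1)t)^{-1/(p-1)}$ solves \eqref{burgers-absorption-eqn} exactly (its flux term vanishes identically) and dominates $u_0$ at $t=0$, so the comparison principle gives $u_{m,p}(x,t)\le w_p(t)$. A direct computation shows $w_p(t)\to1$ and, since $w_p$ is decreasing, $\sup_{t\ge t_0}w_p(t)^p=w_p(t_0)^p\to0$ for every $t_0>0$. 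Combined with the uniform spatial BV bound and the uniform $L^1$ modulus of continuity in time from the existence theory, these estimates yield the compactness needed to extract a subsequence converging in $C([t_0,T];L^1_{loc}(\R))$ to a limit $v$ with $0\le v\le1$.

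Second, I would pass to the limit in the Kruzhkov entropy formulation on each strip $t\ge t_0>0$. The decisive point is that the source is now negligible: since $u_{m,p}(x,t)\le w_p(t_0)<1$ for $t\ge t_0$ and $p$ large, one has $0\le u_{m,p}^p\le w_p(t_0)^p\to0$ uniformly, so the absorption term drops out of every entropy identity in the limit. Using $0\le u_{m,p}\le\|u_0\|_{L^\infty}$ together with strong $L^1_{loc}$ convergence, the flux $(u_{m,p})^m\to v^m$ and the entropy inequalities pass to the limit, so $v$ is the entropy solution of $v_t+(v^m)_x=0$ on $\R\times(t_0,\infty)$. As $t_0>0$ is arbitrary, $v$ is the entropy solution on $\R\times(0,\infty)$, and uniqueness upgrades subsequential convergence to convergence of the whole family.

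Third, and this is where I expect the real work, I would identify the initial trace of $v$ as $\bar u_0:=\min(u_0,1)$. Comparison with the solution $\bar u_{m,p}$ of \eqref{burgers-absorption-eqn} with datum $\bar u_0\le u_0$ gives $\bar u_{m,p}\le u_{m,p}$; since $\bar u_0\le1$ the absorption for $\bar u_{m,p}$ is controlled and $\bar u_{m,p}\to u_{m,\infty}$, yielding the lower bound $v\ge u_{m,\infty}$. For the matching upper bound I would estimate $\int_K|u_{m,p}(x,t)-\bar u_0(x)|\,dx$ for small $t$, uniformly in $p$, splitting $\R$ into $\{u_0\le1\}$, where the absorption is weak and the solution stays close to $u_0=\bar u_0$, and $\{u_0>1\}$, where the barrier $w_p$ shows the supercritical part is burned off in a time tending to $0$ and the total absorbed mass accounts precisely for the excess $\int(u_0-1)_+$. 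The main obstacle is exactly this step: because Theorem \ref{main-thm2} assumes only $u_0\in L^\infty(\R)$, the mass balance must be localized, and one must verify that the initial layer absorbing the supercritical part shrinks to $t=0$ uniformly in $p$ without displacing the subcritical part, so that the limits $p\to\infty$ and $t\to0^+$ may be interchanged in $L^1_{loc}$ to match $v(\cdot,0^+)$ with $\min(u_0,1)$.
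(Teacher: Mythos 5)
Your first two steps are essentially the paper's own argument. The barrier you write down is exactly \eqref{eq-bound-of-characteristic-solutions-for-each-m} (Lemma \ref{lem-upper-bound-of-u-m-p-in-section-4}, proved there via the viscous approximation \eqref{epsilon-approx-eqn} and the parabolic maximum principle); the resulting bound $p\|u_{m,p}\|_{L^{\infty}(\R\times[t_0,\infty))}^{p-1}\le p/((p-1)t_0)$ makes the Kru\v zkov modulus $\omega_R$ of Lemma \ref{lemma-L-1-contraction-of-soutions} uniform in $p\ge 2$, which is the actual compactness mechanism --- note there is no uniform spatial BV bound available when $u_0$ is merely $L^{\infty}$, only this translation modulus, so that clause of your sketch should be corrected; and the entropy inequalities pass to the limit on $t\ge t_0$ because $u_{m,p}^p\le((p-1)t_0)^{-p/(p-1)}\to 0$ (Lemma \ref{p-infty-limit-integral-eqn-lem}). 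Also, ``uniqueness upgrades subsequential convergence'' is only legitimate \emph{after} the initial trace is identified, so that sentence belongs at the end of your third step, not your second.

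The genuine gap is your third step, as you yourself suspect, and it is twofold. First, the lower bound quotes ``$\bar u_{m,p}\to u_{m,\infty}$'' for the datum $\min(u_0,1)$; but this is precisely the theorem in the case $u_0\le 1$ and cannot be cited --- where $u_0$ touches the level $1$ the absorption is \emph{not} uniformly small near $t=0$, so ``the absorption is controlled'' needs a proof. What closes this in the paper is an elementary estimate your sketch lacks: if $\|u_0\|_{L^{\infty}}\le 1$ then $0\le u_{m,p}^p\le 1$, so the weak formulation gives $\left|\int_{\R}u_{m,p}(x,t)\eta\,dx-\int_{\R}u_0\eta\,dx\right|\le C_{\eta}t$ uniformly in $p$, identifying the trace as $u_0$ (Case 1 of Lemma \ref{lem-inital-convergence-where-u-0-is-=strictly-less-than-1}); the trace $1$ on $\{u_0\ge 1\}$ then comes from comparison from below with the data $\min(u_0,1-\varepsilon)$ and letting $\varepsilon\to 0$ (Lemma \ref{p-infty-limit-initial-value-lem}). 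Second, your upper bound via a mass balance fails as stated for $u_0\in L^{\infty}(\R)$: $(u_0-1)_+$ need not be integrable, and localizing a mass balance for a hyperbolic equation requires controlling the lateral flux, for which you have no estimate. The missing tool --- and the one the paper's trace identification actually turns on --- is the finite-propagation-speed $L^1$-contraction of Lemma \ref{lem-L-1-contraction-of-solutions-u-1--and-u-2-2349}: since the cone $B_{R-Nt}$ with $N=m\|u_0\|_{L^{\infty}}^{m-1}$ is independent of $p$, one may replace $u_0$ near any point of $\{u_0<1\}$ by a datum strictly below $1$ without changing the solution on a small cone (Case 2 of Lemma \ref{lem-inital-convergence-where-u-0-is-=strictly-less-than-1}), cover $\{u_0<1\}$ by Lindel\"of, and pass from continuous to general $L^{\infty}$ data by smooth approximation plus the same contraction (Case 2 of Lemma \ref{p-infty-limit-initial-value-lem}); no initial-layer analysis or interchange of limits is then needed. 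Incidentally, a cheap substitute for your upper bound that avoids mass balance entirely: by \eqref{L-infty-bd}, $u_{m,p}\le v_m$ with $v_m$ the fixed absorption-free solution with trace $u_0$, so the limit $w_\infty$ satisfies $w_\infty\le\min(v_m,1)$ and hence $\bigl(w_\infty(\cdot,t)-\min(u_0,1)\bigr)_+\le|v_m(\cdot,t)-u_0|\to 0$ in $L^1_{loc}(\R)$ as $t\to 0$; but the lower bound still requires the two devices above.
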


\begin{thm}\label{main-thm3}
Let $0\leq u_0\in L^{1}(\R)\cap L^{\infty}(\R)$. For any $p>1$, $m>1$, let $u_{m,p}$ be the solution of \eqref{burgers-absorption-eqn} and let $u_{\infty,p}$, $u^0_{\infty}$, $u_{m,\infty}$, be given by Theorem \ref{main-thm1} and Theorem \ref{main-thm2} respectively. Then the following holds:
\begin{itemize}
\item[(i)] as $m\to\infty$, $u_{m,\infty}$ converges  in $L_{loc}^1(\R\times (0,\infty))$ to some function $v_1$ on $\R$, $0\le v_1\le 1$, which satisfies
\begin{equation}\label{eq-condition-of-v-1-infty-indty}
v_1(x)+\psi_1(x)_x=\min\left\{u_0(x),1\right\} \qquad \mbox{in $\mathcal{D}'(\R)$}
\end{equation}
for some function $0\le\psi_1\in L^1(\R)\cap L^{\infty}(\R)$ such that $\psi_1(x)=0$ for a.e. $x\in \left\{x:v_1(x)<1\right\}$.
\item[(ii)] as $p\to\infty$, $u_{\infty,p}$ converges weakly in $L^1(\R\times (0,\infty))$ to $u_{\infty}^0$.
\end{itemize}
\end{thm}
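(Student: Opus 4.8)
The statement splits into two essentially independent assertions, which I would treat by different means: (i) is a pure $m\to\infty$ singular limit for the conservation law $u_t+(u^m)_x=0$ with data bounded by $1$, while (ii) reduces to the asymptotics of an explicit family of solutions of the absorption ODE \eqref{p-power-ode}. For part (i), recall that by Theorem \ref{main-thm2} the function $u_{m,\infty}$ is the entropy solution of $u_t+(u^m)_x=0$ with initial datum $g:=\min(u_0,1)$, and since $0\le u_0\in L^1(\R)\cap L^\infty(\R)$ we have $0\le g\le 1$ with $g\in L^1(\R)\cap L^\infty(\R)$. Thus $\{u_{m,\infty}\}_m$ falls under the $m\to\infty$ analysis used for Theorem \ref{main-thm1} with the absorption term dropped, which is the setting of X.~Xu \cite{X}, and the plan is to rerun that argument with $u_0$ replaced by $g$. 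First I would record the uniform bound $0\le u_{m,\infty}\le 1$ (comparison), mass conservation $\|u_{m,\infty}(t)\|_{L^1}=\|g\|_{L^1}$, and the uniform spatial $BV$ and $L^1$ time-equicontinuity estimates, which give relative compactness of $\{u_{m,\infty}\}$ in $L^1_{loc}(\R\times(0,\infty))$ and hence a limit $v_1$ with $0\le v_1\le 1$. To identify it I integrate the equation in time and set $\Psi_m(x,t):=\int_0^t u_{m,\infty}(x,s)^m\,ds$, so that
\begin{equation*}
u_{m,\infty}(x,t)+(\Psi_m)_x(x,t)=g(x)\qquad\text{in }\mathcal{D}'(\R).
\end{equation*}
Since $0\le\Psi_m\le t$ and $(\Psi_m)_x=g-u_{m,\infty}(\cdot,t)$ is bounded in $L^1$, a subsequence gives $\Psi_m\to\psi_1$ in $L^1_{loc}$ with $0\le\psi_1\in L^1(\R)\cap L^\infty(\R)$, and in the limit one obtains \eqref{eq-condition-of-v-1-infty-indty} with $\psi_1=0$ a.e.\ on $\{v_1<1\}$.

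The crux of (i) is to show that the nonlinear flux vanishes in the limit, $u_{m,\infty}^m\to0$ a.e.\ on $\R\times(0,\infty)$, hence $\Psi_m\to0$, which forces $v_1=g=\min(u_0,1)$ (so one may take $\psi_1\equiv0$) and makes \eqref{eq-condition-of-v-1-infty-indty} an identity in $x$ alone. The delicate point is that $u_{m,\infty}^m\to0$ even on the plateau $\{v_1=1\}$, where $u_{m,\infty}\to1$: for fixed $t>0$ the maximal level set $\{u_{m,\infty}(\cdot,t)=1\}$ is eroded by the rarefaction emanating from the rising edge of each bump of $g$, whose fastest characteristic travels at speed $\sim m\to\infty$, so that $u_{m,\infty}(x,t)<1$ for a.e.\ $x$ once $m$ is large, and there $u_{m,\infty}$ stays below $1$ by an amount that still sends $u_{m,\infty}^m\to0$. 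I expect this flux estimate --- controlling the measure and height of the near-maximal set and then applying dominated convergence in $\Psi_m$ --- to be the main technical obstacle; it is the characteristic difficulty of mesa-type limits and is handled using the entropy structure together with the uniform $L^\infty$ bound.

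For part (ii), by Theorem \ref{main-thm1} the function $u_{\infty,p}$ solves \eqref{p-power-ode} in $\mathcal{D}'(\R\times(0,\infty))$ with initial value $u_\infty^0$, and \eqref{p-limit-initial-value}--\eqref{psi=0-eqn} characterise $u_\infty^0$ as the unique solution of the obstacle-type problem for $u_0$, which is independent of $p$. Since \eqref{p-power-ode} carries no $x$-derivative, for a.e.\ fixed $x$ the map $t\mapsto u_{\infty,p}(x,t)$ solves $\dot y=-y^p$, $y(0)=a$ with $a:=u_\infty^0(x)\in[0,1]$, so explicitly
\begin{equation*}
u_{\infty,p}(x,t)=\bigl(a^{1-p}+(p-1)t\bigr)^{-1/(p-1)}\quad(a>0),\qquad u_{\infty,p}(x,t)=0\quad(a=0).
\end{equation*}
The plan is then a direct computation of the limit: taking logarithms,
\begin{equation*}
\log u_{\infty,p}(x,t)=-\frac{1}{p-1}\log\bigl(a^{1-p}+(p-1)t\bigr)\longrightarrow\log a,
\end{equation*}
because for $0<a<1$ the exponentially large term $a^{1-p}=e^{(p-1)\log(1/a)}$ dominates the polynomial $(p-1)t$, while for $a=1$ one has $(1+(p-1)t)^{-1/(p-1)}\to1$; in every case $u_{\infty,p}(x,t)\to a=u_\infty^0(x)$ for each $t>0$ (trivially when $a=0$). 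Hence $u_{\infty,p}\to u_\infty^0$ pointwise a.e., and since $\dot y=-y^p\le0$ gives the domination $0\le u_{\infty,p}(x,t)\le u_\infty^0(x)\in L^1(\R)$, dominated convergence yields convergence in $L^1\bigl(\R\times(t_0,T)\bigr)$ for all $0<t_0<T$, and in particular the asserted weak $L^1$ convergence. This part is essentially explicit; the only point to verify with care is the $p$-independence of $u_\infty^0$, which follows from uniqueness for \eqref{p-limit-initial-value}--\eqref{psi=0-eqn}.
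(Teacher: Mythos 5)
Your part (ii) is correct and follows essentially the paper's own route: by Theorem \ref{main-thm1} and the uniqueness of the pair $(u_{\infty}^0,\psi)$ solving \eqref{p-limit-initial-value}--\eqref{psi=0-eqn} (the discussion on p.~70 of \cite{X}), the initial value $u_{\infty}^0$ is independent of $p$; then $u_{\infty,p}$ is the explicit solution of \eqref{p-power-ode}, whose pointwise limit as $p\to\infty$ is $u_{\infty}^0(x)$ for every $t>0$. The paper extracts a weakly convergent subsequence and identifies the weak limit with this pointwise limit, whereas you use the domination $0\le u_{\infty,p}(x,t)\le u_{\infty}^0(x)\in L^1(\R)$ and dominated convergence to get strong convergence in $L^1(\R\times(t_0,T))$; this is, if anything, cleaner, and it sidesteps the awkward fact that neither $u_{\infty,p}$ nor $u_{\infty}^0$ need belong to $L^1(\R\times(0,\infty))$, so the ``weak $L^1$'' statement has to be read locally in time in any case.

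In part (i) there is a genuine gap, and it sits exactly where you put the emphasis. The paper's whole proof of (i) is: by Theorem \ref{main-thm2}, $u_{m,\infty}$ is the entropy solution of $u_t+(u^m)_x=0$ with initial value $g=\min(u_0,1)\in L^1(\R)\cap L^{\infty}(\R)$, so the theorem of \cite{X} applied with this initial value gives the convergence and the characterization \eqref{eq-condition-of-v-1-infty-indty}; nothing more is claimed. Your first paragraph (rerun Xu's argument with $u_0$ replaced by $g$) is the same plan and already suffices for the statement --- with two caveats: for $L^1\cap L^{\infty}$ data there is no uniform $BV$ bound, so compactness must come from the Kru\v zkov translation estimates as in Lemma \ref{lemma-L-1-contraction-of-soutions}, and the time-independence of $v_1,\psi_1$ together with $\psi_1=0$ a.e.\ on $\{v_1<1\}$ are the actual content of Xu's theorem, not consequences of soft compactness plus passing to the limit. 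The unproved step is what you call the crux: the a.e.\ vanishing of the flux $u_{m,\infty}^m$, hence $\psi_1\equiv 0$ and $v_1=g$. You give only a heuristic (rarefaction erosion, control of the near-maximal set) and yourself flag it as the main technical obstacle, so as written the argument is incomplete. But this step is also unnecessary: your stronger conclusion, which is true, follows with no new estimate from the uniqueness of solutions of the obstacle problem (p.~70 of \cite{X}, the same fact the paper uses to identify $u_{\infty}^0$). Indeed, since $g\le 1$, the pair $(v_1,\psi_1)=(g,0)$ solves \eqref{eq-condition-of-v-1-infty-indty} with the constraint $\psi_1=0$ a.e.\ on $\{v_1<1\}$, so any subsequential limit must equal it; this also upgrades subsequential convergence to convergence of the full family $m\to\infty$. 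Replace your flux estimate by this uniqueness argument and part (i) closes.
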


Note that as a consequence of Theorem \ref{main-thm3} in general we have
\begin{equation*}
\lim_{p\to\infty}\lim_{m\to\infty}u_{m,p}\ne\lim_{m\to\infty}\lim_{p\to\infty}u_{m,p}.
\end{equation*}

The plan of the paper is as follows. We will prove Theorem \ref{main-thm1} and Theorem \ref{main-thm2} in section two and section three respectively. In section four we will prove Theorem \ref{main-thm3}.

We start with some definitions. We will use the definition of solution in \cite{K} for \eqref{burgers-absorption-eqn}. For any $\varphi\in C^{1}([0,\infty))$, we say that a function $0\leq u\in L^{\infty}(\R\times(0,\infty))$ is a solution of 
\begin{equation}\label{eq-cases-main-problem-general}
\begin{cases}
\begin{aligned}
&u_t+(u^m)_x=\varphi(u) \quad\,  \mbox{ in }\R\times(0,\infty)\\
&u(x,0)=u_0(x)\ge 0 \quad \mbox{in }\R
\end{aligned}
\end{cases}
\end{equation}
if it satisfies the following two conditions:
\begin{enumerate}
\item[\textbf{(i)}] for any $k\in\R$ and $0\le\eta\in C^{\infty}_0(\R\times(0,\infty))$,
\begin{equation*}
\int_{0}^T\int_{\R}\{|u(x,t)-k|\eta_t+|u(x,t)^m-k^m|\eta_x+\mbox{sign}\,(u(x,t)-k)\varphi(u)\eta\}\,dxdt\geq 0;
\end{equation*} 
\item[\textbf{(ii)}] there exists a set $\mathcal{E}$ of measure zero on $[0,\infty)$ such that for any $t\in[0,\infty)\bs\mathcal{E}$ the function $u(x,t)$ is defined almost everywhere in $\R$ and
\begin{equation*}
\lim_{\begin{subarray}{c}t\in[0,\infty)\bs\mathcal{E}\\t\to 0\end{subarray}}\int_{B_r}\left|u(x,t)-u_0(x)\right|\,dx=0
\end{equation*}
holds for any ball $B_r=\{x\in\R:|x|\leq r\}$. 
\end{enumerate}

As observed by \cite{K} for any solution $u$ of \eqref{eq-cases-main-problem-general}, $u$ satisfies
\begin{equation}\label{eq-equation-for-solution-maybe-weak-concept}
\int_{0}^{\infty}\int_{-\infty}^{\infty}u\eta_t\,dxdt+\int_{0}^{\infty}\int_{-\infty}^{\infty}u^m\eta_x\,dxdt=-\int_{0}^{\infty}\int_{-\infty}^{\infty}\varphi(u)\eta\,dxdt
\end{equation}
for any $0\leq\eta\in C_0^{\infty}(\R\times(0,\infty))$.
Note that by the result and the proof of \cite{K} we have the following two results.

\begin{lemma}\label{thm-existence-of-general-solution}
Let $m>1$, $p>1$ and $0\le u_0\in L^{\infty}(\R)$. Then there exist unique solutions $u_{m,p}$, $v_m$, of  \eqref{burgers-absorption-eqn} and 
\begin{equation}\label{burgers-eqn0}
\left\{\begin{aligned}
u_t+(u^m)_x&=0\qquad\,\,\,\,\mbox{ in }\R\times(0,\infty)\\
u(x,0)=&u_0(x)\qquad\mbox{ in }\R.
\end{aligned}\right.
\end{equation}
respectively which satisfy 
\begin{equation}\label{L-infty-bd}
0\le u_{m,p}\le v_m\le\left\|u_0\right\|_{L^{\infty}(\R)}\quad\mbox{ in }\R\times (0,\infty).
\end{equation}
If $0\le u_0\in L^{\infty}(\R)\cap L^1(\R)$, then
\begin{equation}\label{eq-relation-between-solutions-u-v-with-initial-data-u-0-in-L-1}
\int_{\R}u_{m,p}(x,t)\,dx\le\int_{\R}v_m(x,t)\,dx=\int_{\R}u_0(x)\,dx \qquad \forall t>0.
\end{equation}
\end{lemma}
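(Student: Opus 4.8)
The plan is to read off every assertion from the scalar-conservation-law theory of \cite{K}, feeding into that framework only comparison functions and test functions adapted to the particular nonlinearities $u^m$ and $-u^p$. First I would note that on the relevant range of values the flux $f(u)=u^m$ and the absorption term $\varphi(u)=-u^p$ (and $\varphi\equiv0$ for \eqref{burgers-eqn0}) are locally Lipschitz in $u$, so for $u_0\in L^\infty(\R)$ the existence and uniqueness part of \cite{K} immediately produces the unique solutions $u_{m,p}$ and $v_m$ in the sense of conditions (i)--(ii), disposing of those assertions with no further work.

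The bounds in \eqref{L-infty-bd} I would obtain entirely from the comparison principle that accompanies the $L^1$-contraction estimate of \cite{K}. The constant function $0$ solves both \eqref{burgers-absorption-eqn} and \eqref{burgers-eqn0}; since its datum is $\le u_0$, comparison gives $u_{m,p}\ge0$ and $v_m\ge0$. Writing $M:=\|u_0\|_{L^\infty(\R)}$, the constant $M$ solves \eqref{burgers-eqn0} exactly and, because $-M^p\le0$, is a supersolution of \eqref{burgers-absorption-eqn}; since $u_0\le M$, comparison yields $v_m\le M$ and $u_{m,p}\le M$. For the middle inequality I would exploit that $v_m\ge0$ forces $0=\partial_t v_m+(v_m^m)_x\ge-v_m^p$, so that $v_m$ is itself a supersolution of the absorption equation \eqref{burgers-absorption-eqn} carrying the same datum $u_0$; the sub/supersolution comparison then gives $u_{m,p}\le v_m$, completing \eqref{L-infty-bd}.

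The mass relation \eqref{eq-relation-between-solutions-u-v-with-initial-data-u-0-in-L-1} I would derive from the weak identity \eqref{eq-equation-for-solution-maybe-weak-concept}. When $u_0\in L^1(\R)\cap L^\infty(\R)$, comparison with the zero solution in $L^1$ keeps $u_{m,p}(\cdot,t),v_m(\cdot,t)\in L^1(\R)$ with norm at most $\|u_0\|_{L^1(\R)}$; together with $0\le u\le M$ this gives $u^m\le M^{m-1}u$, so the flux lies in $L^1((0,T)\times\R)$. Testing \eqref{eq-equation-for-solution-maybe-weak-concept} with $\chi(x/R)\,\theta(t)$, where $\chi\equiv1$ on $[-1,1]$ and $\theta$ approximates the indicator of $(0,t)$, and letting $R\to\infty$, the flux contribution $R^{-1}\int_0^\infty\!\int_\R u^m\,\chi'(x/R)\,\theta\,dx\,dt$ vanishes by dominated convergence, leaving the conservation identity $\int_\R v_m(x,t)\,dx=\int_\R u_0\,dx$ for \eqref{burgers-eqn0} and the dissipation identity $\int_\R u_{m,p}(x,t)\,dx=\int_\R u_0\,dx-\int_0^t\!\int_\R u_{m,p}^p\,dx\,ds$ for \eqref{burgers-absorption-eqn}. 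Integrating $u_{m,p}\le v_m$ over $\R$ and combining with the conservation identity for $v_m$ then produces the full chain in \eqref{eq-relation-between-solutions-u-v-with-initial-data-u-0-in-L-1}.

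The step I expect to need the most care is the cross-equation comparison $u_{m,p}\le v_m$: since $u_{m,p}$ and $v_m$ a priori solve equations with different right-hand sides, one must recast $v_m$ as an entropy supersolution of the single equation \eqref{burgers-absorption-eqn} and invoke the sub/supersolution form of the Kruzhkov comparison principle, rather than the plain order-preservation for one fixed equation. Verifying that this principle from \cite{K} applies verbatim to the locally Lipschitz data $u^m,-u^p$ restricted to $[0,M]$ is the only genuine point to check; the remaining estimates, including the vanishing of the flux term in the mass computation, are routine consequences of the $0\le u\le M$ and $L^1$ bounds already in hand.
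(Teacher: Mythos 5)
Your overall strategy --- reading existence, uniqueness, and the comparison structure off \cite{K}, then deriving the mass inequality from the weak formulation with a cutoff argument --- is exactly the spirit in which the paper asserts this lemma (it offers no proof beyond ``by the result and the proof of \cite{K}''), and your constant-solution comparisons, the $L^1$ bound via the contraction estimate, and the flux-vanishing computation behind \eqref{eq-relation-between-solutions-u-v-with-initial-data-u-0-in-L-1} are sound as written.

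The one step whose justification does not hold up as stated is the one you yourself flag: the cross-equation comparison $u_{m,p}\le v_m$. Kruzhkov's paper proves uniqueness and $L^1$-stability (and, through its proof, order preservation) for entropy \emph{solutions} of a single fixed equation; it contains no notion of entropy sub/supersolution and no comparison theorem between a solution and a supersolution, so the ``sub/supersolution form of the Kruzhkov comparison principle'' cannot be invoked ``verbatim'' from \cite{K}. Such a principle is true, but establishing it requires redoing the doubling-of-variables argument with the one-sided entropies $(u-k)^{\pm}$ --- a genuine piece of work, not a citation. The way to stay within what \cite{K} actually provides --- and the route the paper itself takes when it needs exactly this kind of bound in the proof of Lemma \ref{lem-upper-bound-of-u-m-p-in-section-4} --- is to compare at the level of the viscous approximations: let $u^{\3}_{m,p}$ and $v^{\3}_m$ solve \eqref{epsilon-approx-eqn} and its analogue without the $-u^p$ term. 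Since $v^{\3}_m\ge 0$ by the classical maximum principle, it is a classical supersolution of \eqref{epsilon-approx-eqn} with the same data, so parabolic comparison gives $0\le u^{\3}_{m,p}\le v^{\3}_m\le\left\|u_0\right\|_{L^{\infty}(\R)}$; by the construction in \cite{K} both approximations converge a.e.\ to $u_{m,p}$ and $v_m$ respectively as $\3\to 0^+$, and \eqref{L-infty-bd} follows. With this rerouting (which also subsumes your constant-solution comparisons, since they too are cleanest at the viscous level), the remainder of your argument goes through unchanged.
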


\begin{lemma}\label{lemma-L-1-contraction-of-soutions}
Let $m>1$, $p>1$, $0\le u_0\in L^{\infty}(\R)$, and $u_{m,p}$ be the unique solution of \eqref{burgers-absorption-eqn} in $\R\times(0,\infty)$. Then for any $R>1$ and $T>t_0>0$ there exists a monotone increasing function $\omega_R\in C([0,\infty))$, $\omega_R(0)=0$, depending only on $R$, $\left\|u_0\right\|_{L^{\infty}}$, $m\left\|u_{m,p}\right\|^{m-1}_{L^{\infty}(\R\times(t_0,T])}$ and $p\left\|u_{m,p}\right\|^{p-1}_{L^{\infty}(\R\times(t_0,T])}$ such that
\begin{equation}\label{eq-similar-to-L-1-contraction-for-precompactness-1}
\int_{|x|<R}\left|u_{m,p}(x+x_0,t)-u_{m,p}(x,t)\right|\,dx\leq \omega_R(|x_0|) \qquad  \forall |x_0|\leq 1,\,\, t_0\leq t\leq T
\end{equation}
and
\begin{equation}\label{eq-similar-to-L-1-contraction-for-precompactness-2}
\int_{|x|<R}\left|u_{m,p}(x,t_1)-u_{m,p}(x,t_2)\right|\,dx\leq \omega_R(|t_1-t_2|) \qquad  \forall t_1,t_2\in[t_0,T].
\end{equation}
\end{lemma}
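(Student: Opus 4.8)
The two assertions \eqref{eq-similar-to-L-1-contraction-for-precompactness-1} and \eqref{eq-similar-to-L-1-contraction-for-precompactness-2} are a uniform spatial and a uniform temporal $L^1_{loc}$-modulus of continuity, and the plan is to establish the spatial one first and then deduce the temporal one from it together with the weak formulation \eqref{eq-equation-for-solution-maybe-weak-concept}. Two structural features of \eqref{burgers-absorption-eqn} drive the argument. First, since the absorption $\varphi(u)=-u^p$ depends on $u$ alone, the equation is invariant under spatial translation, so $(x,t)\mapsto u_{m,p}(x+x_0,t)$ is again the solution of \eqref{burgers-absorption-eqn}, now with initial datum $u_0(\cdot+x_0)$. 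Second, by the $L^{\infty}$ bound \eqref{L-infty-bd} the flux $u\mapsto u^m$ and the source $u\mapsto -u^p$ are Lipschitz on the range $[0,\|u_{m,p}\|_{L^{\infty}(\R\times(t_0,T])}]$ of the solution, with constants controlled by $L:=m\|u_{m,p}\|^{m-1}_{L^{\infty}(\R\times(t_0,T])}$ and $K:=p\|u_{m,p}\|^{p-1}_{L^{\infty}(\R\times(t_0,T])}$, where $L$ is the maximal speed of propagation. These are exactly the quantities on which $\omega_R$ is permitted to depend.

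For the spatial estimate I would run Kruzhkov's doubling-of-variables comparison of \cite{K} on the two entropy solutions $u_{m,p}(\cdot,\cdot)$ and $u_{m,p}(\cdot+x_0,\cdot)$, localized to the backward cone of dependence of $\{|x|<R\}$. Since the source is Lipschitz with constant $K$ on the relevant range, the doubling argument followed by a Gronwall step yields the localized comparison
\begin{equation*}
\int_{|x|<R}|u_{m,p}(x+x_0,t)-u_{m,p}(x,t)|\,dx \le e^{K(t-s)}\int_{|x|<R+L(t-s)}|u_{m,p}(x+x_0,s)-u_{m,p}(x,s)|\,dx
\end{equation*}
for $t_0\le s\le t\le T$. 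Taking $s=t_0$ reduces the whole problem to a single spatial modulus at the one time $t_0$, over the slightly larger ball of radius $R+L(T-t_0)$, with the harmless factor $e^{K(T-t_0)}$. It then remains to produce a modulus, vanishing as $x_0\to0$, for $\int_{|x|<\rho}|u_{m,p}(x+x_0,t_0)-u_{m,p}(x,t_0)|\,dx$. As $u_0$ is only $L^{\infty}$, this cannot be inherited from the datum and must come from the regularizing effect of the genuinely nonlinear flux: for $m>1$ the map $u\mapsto u^m$ is strictly convex on $[0,\infty)$, so for each fixed $t_0>0$ the solution acquires local bounded variation (an Oleinik/$BV$-type gain), and $\int_{|x|<\rho}|u_{m,p}(x+x_0,t_0)-u_{m,p}(x,t_0)|\,dx\le |x_0|\,\mathrm{TV}_{[-\rho-1,\rho+1]}(u_{m,p}(\cdot,t_0))$ gives a linear modulus. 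Collecting these and passing to a monotone continuous majorant produces \eqref{eq-similar-to-L-1-contraction-for-precompactness-1}.

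For the temporal estimate I would feed the spatial modulus into \eqref{eq-equation-for-solution-maybe-weak-concept}. Testing against $\phi\in C_0^{\infty}(\R)$ supported in $\{|x|<R+1\}$ and integrating between $t_1$ and $t_2$ gives the weak-in-time bound $|\int(u_{m,p}(t_2)-u_{m,p}(t_1))\phi\,dx|\le |t_2-t_1|\,(\|u_{m,p}\|_{L^{\infty}}^m\|\phi_x\|_{L^1}+\|u_{m,p}\|_{L^{\infty}}^p\|\phi\|_{L^1})$. Writing the $L^1$-norm over $\{|x|<R\}$ as a supremum over such $\phi$ with $|\phi|\le1$, splitting $\phi=\phi*\rho_\delta+(\phi-\phi*\rho_\delta)$, bounding the smooth part by the weak-in-time bound (at the cost of a factor $\delta^{-1}$) and the remainder by the spatial modulus $\omega_R^{\mathrm{sp}}(\delta)$ of the previous step, and finally optimizing in $\delta$, produces a modulus in $|t_1-t_2|$, giving \eqref{eq-similar-to-L-1-contraction-for-precompactness-2}.

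The main obstacle is the spatial modulus at the fixed time $t_0$, i.e.\ the regularizing gain from merely $L^{\infty}$ data. The subtlety is that $u^m$ is strictly but not uniformly convex: its second derivative $m(m-1)u^{m-2}$ degenerates as $u\to0^+$ when $m>2$ and is unbounded when $1<m<2$, so the classical Oleinik estimate does not apply verbatim. One must exploit strict convexity away from $u=0$ together with the a priori bound \eqref{L-infty-bd}, control the zeroth-order absorption as a Lipschitz perturbation, and, crucially, keep the resulting constant expressed through $L$ and $K$ alone, so that no uncontrolled dependence on $m$ or $p$ enters. This last point is what later makes $\omega_R$ uniform as $m\to\infty$ or $p\to\infty$, and hence what drives the compactness in Theorem \ref{main-thm1} and Theorem \ref{main-thm2}.
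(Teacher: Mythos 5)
Your overall skeleton (translation invariance, Kruzhkov-type $L^1$ comparison in cones, then the temporal modulus deduced from the spatial one via mollified test functions) is exactly the content of the paper's ``proof'', which is in fact just the citation ``by the result and the proof of \cite{K}''. But the step you single out as the main obstacle is where your argument goes wrong, and it goes wrong because of a false premise. You assert that since $u_0$ is only $L^{\infty}$, the spatial modulus ``cannot be inherited from the datum''. This is not true: every $u_0\in L^{\infty}(\R)\subset L^1_{loc}(\R)$ satisfies $\int_{|x|<\rho}|u_0(x+h)-u_0(x)|\,dx\to 0$ as $h\to 0$ (continuity of translations in $L^1$); what is true is only that the \emph{rate} depends on $u_0$ itself and not merely on $\|u_0\|_{L^{\infty}}$. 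Since $u_{m,p}(\cdot+x_0,\cdot)$ solves \eqref{burgers-absorption-eqn} with datum $u_0(\cdot+x_0)$, Lemma \ref{lem-L-1-contraction-of-solutions-u-1--and-u-2-2349} applied with $R$ replaced by $R+NT$, $N=m\|u_0\|^{m-1}_{L^{\infty}(\R)}$, gives directly
\begin{equation*}
\int_{|x|<R}\left|u_{m,p}(x+x_0,t)-u_{m,p}(x,t)\right|\,dx\le\int_{|x|<R+NT}\left|u_0(x+x_0)-u_0(x)\right|\,dx\qquad\forall\, 0<t\le T,\ |x_0|\le 1,
\end{equation*}
and the right-hand side is the desired modulus $\omega_R(|x_0|)$; no regularization is needed, and no Gronwall factor $e^{K(t-s)}$ either, since $\mathrm{sign}\,(u_1-u_2)\left(u_2^p-u_1^p\right)\le 0$ makes the absorption dissipative for differences. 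Your final paragraph (the temporal estimate from the weak formulation plus the spatial modulus) is then correct as written and matches Kruzhkov's argument.

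The replacement you propose for that step --- an Oleinik/BV gain whose constants are ``expressed through $L$ and $K$ alone'' --- is not merely incomplete, as you concede; it cannot be completed. If such a modulus existed, depending only on $R$, $\|u_0\|_{L^{\infty}}$, $L=m\|u_{m,p}\|^{m-1}_{L^{\infty}}$ and $K=p\|u_{m,p}\|^{p-1}_{L^{\infty}}$, it would be uniform along $m\to\infty$ (for $u_0\in L^1\cap L^{\infty}$ these quantities stay bounded by \eqref{eq-upper-bound-of-u-m-by-v-m-the-solution-of-homogeneous-case}), hence would pass to the limit; but by \cite{X} and Theorem \ref{main-thm1}, when $u_0<1$ the $m\to\infty$ limit profile is $u_0$ itself, an essentially arbitrary bounded function. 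Taking compactly supported data oscillating at frequency $1/\3$ with $\|u_0\|_{L^{\infty}}\le 9/10$ and $\|u_0\|_{L^1}$ fixed then yields a family with identical controlling norms but no common $L^1_{loc}$ modulus --- a contradiction. The structural reason is that $u^m$ flattens on $[0,1)$ as $m\to\infty$, so every trace of genuine nonlinearity (and with it any BV gain) disappears in exactly the regime where the paper needs the lemma. In fairness, the lemma's stated dependence on ``$\|u_0\|_{L^{\infty}}$'' is what pushed you toward regularization: the dependence the proof actually delivers, and the one the applications in Sections \ref{section-m-infty} and \ref{section-p-infty} use, is on $u_0$ through its $L^1$ translation modulus (with the cone opening $N$ rendered harmless in Section \ref{section-m-infty} by working in $L^1(\R)$, where the contraction holds globally). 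So the verdict is a genuine gap: the one step you could not fill is unfillable by your route, while the intended route through the initial datum is elementary.
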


By Theorem 1 of \cite{K} and Lemma \ref{thm-existence-of-general-solution} we have the following result.

\begin{lemma}\label{lem-L-1-contraction-of-solutions-u-1--and-u-2-2349}
Let $m>1$, $p>1$, and  $u_{0,1}$, $u_{0,2}\in L^{\infty}(\R)$ be non-negative functions on $\R$. Suppose $u_1$, $u_2$, are the solutions of \eqref{burgers-absorption-eqn} in $\R\times(0,\infty)$ with initial value $u_0=u_{0,1}, u_{0,2}$, respectively. Let 
\begin{equation*}
N=\max\left\{m\|u_{0,1}\|_{L^{\infty}(\R)}^{m-1},m\|u_{0,2}\|_{L^{\infty}(\R)}^{m-1}\right\}. 
\end{equation*}
Then 
\begin{equation}\label{eq-L-1-contraction-fo-second-order-PDE-23-with-differences-of-two-sols}
\|u_1(\cdot,t)-u_{2}(\cdot,t)\|_{L^1(B_{R-Nt})}\le\|u_{0,1}-u_{0,2}\|_{L^1(B_R)} \qquad \forall 0<t<R/N,R>0,p>1.
\end{equation}
\end{lemma}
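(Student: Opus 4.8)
The plan is to derive this from the $L^1$-contraction and finite-speed-of-propagation theory for scalar conservation laws with a source term, which is exactly the content of Theorem~1 of \cite{K}, by checking that the two structural features that theory requires hold for \eqref{burgers-absorption-eqn}. These features are: (a) the absorption term $\varphi(u)=-u^p$ is nonincreasing in $u\ge 0$, which is what produces the contraction; and (b) the flux $f(u)=u^m$ has Lipschitz constant exactly $N$ on the range spanned by the two solutions, which is what produces the backward cone of slope $N$.

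First I would record the pointwise bounds. By Lemma~\ref{thm-existence-of-general-solution} each solution satisfies $0\le u_i\le\|u_{0,i}\|_{L^{\infty}(\R)}$, so both solutions take values in the interval $[0,\max\{\|u_{0,1}\|_{L^{\infty}},\|u_{0,2}\|_{L^{\infty}}\}]$. On this interval $|f'(u)|=mu^{m-1}$ is increasing (since $m>1$) and hence bounded above by $N$. In particular the Kruzhkov-type bound
\[
\left|\mbox{sign}(u_1-u_2)\left(u_1^m-u_2^m\right)\right|\le N\left|u_1-u_2\right|
\]
holds almost everywhere; this is the estimate responsible for the propagation speed $N$.

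Next I would run Kruzhkov's doubling of variables. Writing the entropy inequality (i) for $u_1(x,t)$ with constant $k=u_2(y,s)$ and for $u_2(y,s)$ with $k=u_1(x,t)$, testing against $\eta(x,t,y,s)=\rho_{\delta}(x-y)\rho_{\delta}(t-s)\zeta\!\left(\tfrac{x+y}{2},\tfrac{t+s}{2}\right)$ with $\rho_{\delta}$ a standard mollifier and $0\le\zeta\in C_0^{\infty}(\R\times(0,\infty))$, adding the two inequalities and letting $\delta\to0$ would produce the Kato inequality
\[
\int_0^{\infty}\!\!\int_{\R}\Big\{|u_1-u_2|\,\zeta_t+\mbox{sign}(u_1-u_2)(u_1^m-u_2^m)\,\zeta_x+\mbox{sign}(u_1-u_2)\big(\varphi(u_1)-\varphi(u_2)\big)\zeta\Big\}\,dx\,dt\ge 0.
\]
Because $u\mapsto u^p$ is increasing on $[0,\infty)$, the source integrand satisfies $\mbox{sign}(u_1-u_2)\big(\varphi(u_1)-\varphi(u_2)\big)=-\mbox{sign}(u_1-u_2)(u_1^p-u_2^p)\le 0$, so this term may simply be discarded.

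Finally I would choose $\zeta$ to approximate the indicator of the truncated backward cone $\{(x,s):|x|\le R-Ns,\ 0\le s\le t\}$. The top and bottom time-slices produce the mass at time $t$ minus the mass at time $0$, the latter handled by the initial-trace condition (ii) and giving $\|u_{0,1}-u_{0,2}\|_{L^1(B_R)}$; the lateral contribution on $|x|=R-Ns$ is controlled by the displayed bound $|\mbox{sign}(u_1-u_2)(u_1^m-u_2^m)|\le N|u_1-u_2|$, i.e.\ the cone slope $N$ dominates the characteristic speed, so this contribution has the favorable sign. Passing to the limit in the approximation of $\zeta$ then yields \eqref{eq-L-1-contraction-fo-second-order-PDE-23-with-differences-of-two-sols}. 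I expect the only genuinely delicate points to be the justification of the $\delta\to0$ limit in the doubling argument and the careful accounting of the lateral-boundary terms so that the constant $N$ appears as the cone slope; the monotonicity of the source, which annihilates the absorption contribution, is immediate.
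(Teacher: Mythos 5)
Your proposal is correct and takes essentially the same route as the paper: the paper's entire proof is the citation ``By Theorem 1 of \cite{K} and Lemma \ref{thm-existence-of-general-solution}'', i.e.\ exactly your reduction, where the $L^{\infty}$ bound \eqref{L-infty-bd} makes $N$ a Lipschitz constant for the flux $u\mapsto u^m$ on the range of both solutions and the monotonicity of $u\mapsto u^p$ gives the source term the dissipative sign, so Kruzhkov's contraction holds with no exponential factor. Your doubling-of-variables and cone-test-function sketch is simply the standard proof of that cited theorem, spelled out rather than invoked.
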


We will now assume that $0\le u_0\in L^{\infty}(\R)$ and let $u_{m,p}$, $v_m$, be the solutions of \eqref{burgers-absorption-eqn} and\eqref{burgers-eqn0} respectively for the rest of the paper. For any $x_0\in \R$ and $R>0$, we let
$B_R(x_0)=\{x\in\R:|x-x_0|<R\}$ and $B_R=B_R(0)$.

\section{Singular limit as $m\to\infty$}\label{section-m-infty}
\setcounter{equation}{0}
\setcounter{thm}{0}

In this section we will prove Theorem \ref{main-thm1}. For fixed $p>1$, we will write $u_m:=u_{m,p}$ for any $m>1$. We will also assume that $0\le u_0\in L^{\infty}(\R)\cap L^1(\R)$ and let 
\begin{equation*}
\psi_m(x,t)=\int_0^{t}u_m(x,\tau)^m\,d\tau
\end{equation*}
in this section. Let $\left\{m_i\right\}_{i=1}^{\infty}\in\Z^+$ be a sequence such that $m_i\to\infty$ as $i\to\infty$. By \eqref{L-infty-bd} and the result on P. 64 of \cite{X},
\begin{equation}\label{eq-upper-bound-of-u-m-by-v-m-the-solution-of-homogeneous-case}
0\le u_m(x,t)^m\le v_m(x,t)^m\leq \frac{2\left\|u_0\right\|_{L^1(\R)}}{(m-1)t}, \qquad \mbox{a.e. $(x,t)\in\R\times(0,\infty)$}.
\end{equation}
Then
\begin{equation*}
m\left(u_m\right)^{m-1}\leq m\left(\frac{2\left\|u_0\right\|_{L^1(\R)}}{(m-1)t}\right)^{\frac{m-1}{m}}=\left(\frac{m}{m-1}\right)(m-1)^{\frac{1}{m}}\left(\frac{2\left\|u_0\right\|_{L^1(\R)}}{t}\right)^{\frac{m-1}{m}}.
\end{equation*}
Hence for any $t_0>0$ there exists a constant $M_{t_0}>0$ such that 
\begin{equation*}
m\|u_m\|_{L^{\infty}(\R\times (t_0,\infty))}^{m-1}\le M_{t_0}\quad \forall m\ge 2.
\end{equation*}
Thus for any $R>1$ and $T>t_0>0$, we can choose the function $\omega_{R}$ in Lemma \ref{lemma-L-1-contraction-of-soutions} to be independent  of $m\geq 2$. Hence, by \eqref{L-infty-bd}, \eqref{eq-similar-to-L-1-contraction-for-precompactness-1} and \eqref{eq-similar-to-L-1-contraction-for-precompactness-2}, the sequence $\left\{u_{m_i}\right\}_{i=1}^{\infty}$ is equi-continuous in $C\left([t_0,T];L_{loc}^1(\R)\right)$ for any $T>t_0>0$. Thus by \eqref{eq-upper-bound-of-u-m-by-v-m-the-solution-of-homogeneous-case}, the Ascoli theorem and a diagonalization argument the sequence $\{u_{m_i}\}_{i=1}$ has a subsequence which we may assume without loss of generality to be the sequence itself that converges in $C\left([t_0,T];L_{loc}^1(\R)\right)$ for any $T>t_0>0$ to some function $u_{\infty, p}\in C\left((0,\infty);L_{loc}^1(\R)\right)$, $0\le u_{\infty,p}\le 1$, as $i\to\infty$. When there is no ambiguity we will drop the subscript p and write $u_{\infty}$ for $u_{\infty, p}$.

\begin{lemma}\label{lem-convergence-of-u-m-to-u-infty-with-uniqueness}
$u_{\infty}$ satisfies \eqref{p-power-ode}. 
\end{lemma}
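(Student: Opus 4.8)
The plan is to pass to the limit $m=m_i\to\infty$ in the weak formulation \eqref{eq-equation-for-solution-maybe-weak-concept} satisfied by each $u_m=u_{m,p}$. With $\varphi(u)=-u^p$ this reads
\[
\int_0^\infty\!\!\int_{-\infty}^\infty u_m\eta_t\,dxdt+\int_0^\infty\!\!\int_{-\infty}^\infty u_m^m\eta_x\,dxdt=\int_0^\infty\!\!\int_{-\infty}^\infty u_m^p\eta\,dxdt
\]
for every $0\le\eta\in C_0^\infty(\R\times(0,\infty))$, and I would show that each term converges to the corresponding term with $u_m$ replaced by $u_\infty$ and the flux term dropped, yielding
\[
\int_0^\infty\!\!\int_{-\infty}^\infty u_\infty\eta_t\,dxdt=\int_0^\infty\!\!\int_{-\infty}^\infty u_\infty^p\eta\,dxdt,
\]
which is precisely the weak form of \eqref{p-power-ode}. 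Since \eqref{eq-equation-for-solution-maybe-weak-concept} is an equality, a standard linearity argument (adding a multiple of a fixed nonnegative cutoff equal to $1$ on $\supp\eta$) extends the limiting identity from nonnegative test functions to all $\eta\in C_0^\infty(\R\times(0,\infty))$, giving $u_t=-u^p$ in $\mathcal D'(\R\times(0,\infty))$.

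Fix such an $\eta$ and write $K=\supp\eta\subset[-R,R]\times[t_0,T]$ with $t_0>0$. The two ``easy'' terms are handled by the established convergence $u_m\to u_\infty$ in $C([t_0,T];L_{loc}^1(\R))$. For the time-derivative term this gives directly $\int\!\int u_m\eta_t\to\int\!\int u_\infty\eta_t$. For the absorption term I would use that $s\mapsto s^p$ is Lipschitz on $[0,\|u_0\|_{L^\infty}]$ with constant $p\|u_0\|_{L^\infty}^{p-1}$, so by the uniform bound \eqref{L-infty-bd},
\[
\left|\int_0^\infty\!\!\int_{-\infty}^\infty (u_m^p-u_\infty^p)\eta\,dxdt\right|\le p\|u_0\|_{L^\infty}^{p-1}\|\eta\|_{L^\infty}\int_{t_0}^T\!\!\int_{-R}^R|u_m-u_\infty|\,dxdt\longrightarrow 0.
\]

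The crux is the flux term $\int\!\int u_m^m\eta_x$, and this is where the singular nature of the limit enters. On $K$ we have $t\ge t_0$, so the smoothing estimate \eqref{eq-upper-bound-of-u-m-by-v-m-the-solution-of-homogeneous-case} yields the uniform bound $0\le u_m^m\le 2\|u_0\|_{L^1}/((m-1)t_0)$ throughout $\supp\eta_x$, whence
\[
\left|\int_0^\infty\!\!\int_{-\infty}^\infty u_m^m\eta_x\,dxdt\right|\le\frac{2\|u_0\|_{L^1}}{(m-1)t_0}\int_{t_0}^T\!\!\int_{-R}^R|\eta_x|\,dxdt\longrightarrow 0
\]
as $m=m_i\to\infty$. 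Combining the three limits gives the desired identity. I expect this flux estimate to be the main (indeed the only) substantive point: the argument works precisely because \eqref{eq-upper-bound-of-u-m-by-v-m-the-solution-of-homogeneous-case} forces $u_m^m\to 0$ uniformly on compact subsets of $\R\times(0,\infty)$, which is what collapses the transport term $(u^m)_x$ in the limit and leaves only the pointwise ODE $u_t=-u^p$. No compactness beyond what has already been assembled for $\{u_{m_i}\}$ is needed.
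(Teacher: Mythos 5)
Your proposal is correct and follows essentially the same route as the paper: pass to the limit $m=m_i\to\infty$ in the weak formulation \eqref{eq-equation-for-solution-maybe-weak-concept}, with the key point being that the smoothing estimate \eqref{eq-upper-bound-of-u-m-by-v-m-the-solution-of-homogeneous-case} forces $(u_m)^m\to 0$ uniformly on $\R\times[t_0,\infty)$, so the flux term disappears. Your write-up merely supplies details the paper leaves implicit (the Lipschitz bound for the absorption term and the extension from nonnegative to arbitrary test functions), which is fine.
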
 
\begin{proof}
By \eqref{eq-upper-bound-of-u-m-by-v-m-the-solution-of-homogeneous-case}, $\left(u_m\right)^m\to 0$ uniformly on $\R\times[T_0,\infty)$ for any fixed $T_0>0$ as $m\to \infty$. Putting $u=u_m$, $\varphi(u)=-(u_m)^p$, $m=m_i$, in \eqref{eq-equation-for-solution-maybe-weak-concept} and letting $i\to\infty$, we get
\begin{equation*}
\int_{0}^{\infty}\int_{-\infty}^{\infty}u_{\infty}\eta_t\,dxdt=\int_{0}^{\infty}\int_{-\infty}^{\infty}\left(u_{\infty}\right)^p\eta\,dxdt \qquad \forall 0\le\eta\in C_0^{\infty}(\R\times(0,\infty)).
\end{equation*}
and \eqref{p-power-ode} follows.
\end{proof}

\begin{lemma}\label{lem-precompactness-of-int-u-sub-m-to-m-over-o-to-t}
For any $T>0$ the sequence of functions $\{\psi_m(x,t)\}_{m>p}$ is equi-continuous in $C([0,T);L^1(\R))$.
\end{lemma}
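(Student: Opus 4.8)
The plan is to reduce the statement to a uniform-in-$m$ bound on the time integral of $\int_{\R}u_m^m\,dx$, and then to feed in the decay estimate \eqref{eq-upper-bound-of-u-m-by-v-m-the-solution-of-homogeneous-case} together with the mass bound \eqref{eq-relation-between-solutions-u-v-with-initial-data-u-0-in-L-1}. Since $u_m\ge 0$, for $0\le t_2\le t_1\le T$ we have $\psi_m(x,t_1)-\psi_m(x,t_2)=\int_{t_2}^{t_1}u_m(x,\tau)^m\,d\tau\ge 0$, so Tonelli's theorem gives
\[
\|\psi_m(\cdot,t_1)-\psi_m(\cdot,t_2)\|_{L^1(\R)}=\int_{t_2}^{t_1}\left(\int_{\R}u_m(x,\tau)^m\,dx\right)d\tau .
\]
Hence everything reduces to controlling $g_m(\tau):=\int_{\R}u_m(x,\tau)^m\,dx$ uniformly in $m$.

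To estimate $g_m$ I would write $u_m^m=u_m\cdot u_m^{m-1}$ and use the pointwise decay \eqref{eq-upper-bound-of-u-m-by-v-m-the-solution-of-homogeneous-case} in the form $u_m^{m-1}=(u_m^m)^{(m-1)/m}\le\left(\frac{2\|u_0\|_{L^1}}{(m-1)\tau}\right)^{(m-1)/m}$, combined with $\int_{\R}u_m\,dx\le\|u_0\|_{L^1}$ from \eqref{eq-relation-between-solutions-u-v-with-initial-data-u-0-in-L-1}. This gives $g_m(\tau)\le\|u_0\|_{L^1}\left(\frac{2\|u_0\|_{L^1}}{m-1}\right)^{(m-1)/m}\tau^{-(m-1)/m}$, which is integrable near $\tau=0$. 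Integrating and using $\int_{t_2}^{t_1}\tau^{-(m-1)/m}\,d\tau=m\,(t_1^{1/m}-t_2^{1/m})$ yields
\[
\|\psi_m(\cdot,t_1)-\psi_m(\cdot,t_2)\|_{L^1(\R)}\le \|u_0\|_{L^1}\,C_m\,\bigl(t_1^{1/m}-t_2^{1/m}\bigr),\qquad C_m:=m\left(\frac{2\|u_0\|_{L^1}}{m-1}\right)^{(m-1)/m}.
\]
A short computation with logarithms shows $C_m\to 2\|u_0\|_{L^1}$ as $m\to\infty$, so $C_m$ is bounded for $m\ge 2$; this removes all $m$-dependence from the constant.

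On any interval $[t_0,T]$ with $t_0>0$ I would then close the argument by the mean value theorem: since $\frac{d}{ds}s^{1/m}=\frac1m s^{1/m-1}\le\frac1m t_0^{1/m-1}$ for $s\ge t_0$, we get $m\,(t_1^{1/m}-t_2^{1/m})\le t_0^{1/m-1}|t_1-t_2|\le C(t_0)|t_1-t_2|$ with $C(t_0)=\max(t_0^{-1},1)$ independent of $m$. Therefore $\|\psi_m(\cdot,t_1)-\psi_m(\cdot,t_2)\|_{L^1(\R)}\le \|u_0\|_{L^1}\,C_m\,C(t_0)\,|t_1-t_2|$, a Lipschitz bound in $t$ that is uniform in $m\ge 2$, which yields the desired equi-continuity on every compact subinterval of $(0,T)$.

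The step I expect to be the main obstacle is the endpoint $t=0$. Taking $t_2=0$ the estimate reads $\|\psi_m(\cdot,t_1)\|_{L^1}\le\|u_0\|_{L^1}\,C_m\,t_1^{1/m}$, and since $t_1^{1/m}\to 1$ and $C_m\to 2\|u_0\|_{L^1}$, the right-hand side does not tend to $0$ as $t_1\to 0$ uniformly in $m$ (equivalently, the Lipschitz constant $C(t_0)$ blows up as $t_0\to 0$). This reflects a genuine short-time layer: on $\{u_0>1\}$ one has $u_m^m\approx u_0^m$ for $\tau$ near $0$, so $g_m$ is not bounded uniformly in $m$ near $\tau=0$, and the mass that accumulates there is precisely what produces the nontrivial limit $\psi$ in \eqref{p-limit-initial-value}. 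Thus the clean estimate above should be regarded as giving equi-continuity on each $[t_0,T]\subset(0,T)$; when $\|u_0\|_{L^\infty}\le 1$ the bound $u_m^m\le u_m$ gives $g_m\le\|u_0\|_{L^1}$ and hence a global Lipschitz bound valid up to $t=0$, while in the general case the uniform short-time control is the crux that must be handled with more care.
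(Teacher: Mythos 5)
Your treatment of the time variable is correct and is, in substance, exactly the paper's own argument: the paper also writes $u_m^m=u_m\cdot u_m^{m-1}$, applies \eqref{eq-upper-bound-of-u-m-by-v-m-the-solution-of-homogeneous-case} together with the mass bound \eqref{eq-relation-between-solutions-u-v-with-initial-data-u-0-in-L-1}, and integrates in $\tau$ to obtain precisely your estimate, which is \eqref{eq-time-difference-of-psi}. Your criticism of the endpoint $t=0$ is also well founded, and it applies to the paper's statement and proof as well: since $0\in[0,T)$, literal equi-continuity on $[0,T)$ would force $\psi\equiv 0$ (because $\psi_m(\cdot,0)=0$ while $\psi_m(\cdot,t)\to\psi$ in $L^1(\R)$ for each fixed $t>0$ along the subsequence constructed afterwards), contradicting \eqref{p-limit-initial-value} whenever $u_0>1$ on a set of positive measure; and the paper's own modulus $(t+h)^{1/m}-t^{1/m}\le h^{1/m}$ from \eqref{eq-time-difference-of-psi} is likewise not uniform in $m$ near $t=0$. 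So the honest conclusion, both for your argument and for the paper's, is equi-continuity in time on compact subsets of $(0,T)$ (up to $t=0$ only when $\|u_0\|_{L^{\infty}(\R)}\le 1$).

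There is, however, a genuine gap: you prove only equi-continuity in $t$, whereas the lemma, in the sense in which this paper uses the phrase and the result, also asserts uniform control of spatial translations, namely the estimate \eqref{eq-space-difference-of-psi},
\begin{equation*}
\int_{\R}\left|\psi_m(x+h,t)-\psi_m(x,t)\right|\,dx\leq C\,h\qquad\forall m>p-1,\ h>0,
\end{equation*}
with $C$ independent of $m$. Deriving this is where almost all of the work in the paper's proof goes: one mollifies the weak formulation \eqref{eq-integration-multiplying-test-function-and-by-parts--1}, integrates the resulting identity \eqref{eq-integration-substituting-mollifier} in $\xi$ and $\tau$, passes to the limit in the mollification parameter to get \eqref{eq-aligned-space-difference-of-psi-34}, and controls the absorption term by \eqref{eq-aligned-control-of-term-with-u-^-p-after-some-calculus}. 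This ingredient cannot be dropped: immediately after the lemma, the paper extracts from $\{\psi_{m_i}\}$ a subsequence converging in $C([0,T];L^1(\R))$ by the Ascoli theorem, and Ascoli requires, besides equi-continuity in $t$, that for each fixed $t$ the set $\{\psi_m(\cdot,t)\}_m$ be relatively compact in $L^1_{loc}(\R)$. Your bounds yield only boundedness in $L^1(\R)\cap L^{\infty}(\R)$, which gives no $L^1$-compactness; it is exactly the uniform translation estimate, via the Fr\'echet--Kolmogorov criterion, that supplies it. (Compare the paper's earlier use of Lemma \ref{lemma-L-1-contraction-of-soutions}, where both \eqref{eq-similar-to-L-1-contraction-for-precompactness-1} and \eqref{eq-similar-to-L-1-contraction-for-precompactness-2} are needed to conclude that $\{u_{m_i}\}$ is ``equi-continuous in $C([t_0,T];L^1_{loc}(\R))$''.) To complete your proof you must therefore add the derivation of \eqref{eq-space-difference-of-psi}; it does not follow from the estimates you have.
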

\begin{proof}
We will use a modification of the technique of \cite{X} to prove the lemma. We first extend $u_m$ to a function on $\R^2$ by letting $u_m(x,t)=0$ for all $t<0$, $x\in\R$. Since $u_m$ satisfies \eqref{eq-equation-for-solution-maybe-weak-concept} with $\phi (u)=-(u_m)^p$, by \eqref{eq-equation-for-solution-maybe-weak-concept} and  an approximation argument,
\begin{equation}\label{eq-integration-multiplying-test-function-and-by-parts--1}
\int_{-\infty}^{\infty}\int_{-\infty}^{\infty}u_m\eta_t\,dxdt+\int_{\infty}^{\infty}\int_{-\infty}^{\infty}(u_m)^m\eta_x\,dxdt+\int_{-\infty}^{\infty}u_0\eta\,dx=\int_{-\infty}^{\infty}\int_{-\infty}^{\infty}(u_m)^p\eta\,dxdt\quad\forall 0\le\eta\in C_0^{\infty}(\R^2).
\end{equation}
We choose $\Phi\in C_0^{\infty}(\R^2)$, $0\leq\Phi\leq 1$, such that $\int_{\R}\int_{\R}\Phi\,dxdy=1$ 
and let $J_{\3}(x,t)=\frac{1}{\3}\Phi\left(\frac{x}{\3},\frac{t}{\3}\right)$ for any $\3>0$. 
Putting $\eta(x,t)=J_{\3}(\xi-x,\tau-t)$ in \eqref{eq-integration-multiplying-test-function-and-by-parts--1},
\begin{equation}\label{eq-integration-substituting-mollifier}
-\left(A_{m,\3}(\xi,\tau)\right)_{\tau}-\left(B_{m,\3}(\xi,\tau)\right)_{\xi}+\int_{-\infty}^{\infty}J_{\3}(\xi-x,\tau)u_0(x)\,dx=C_{m,\3}(\xi,\tau)
\end{equation}
where
\begin{equation*}
A_{m,\3}(\xi,\tau)=\int_{-\infty}^{\infty}\int_{-\infty}^{\infty}u_m(x,t)J_{\3}(\xi-x,\tau-t)\,dxdt,
\end{equation*}
\begin{equation*}
B_{m,\3}(\xi,\tau)=\int_{-\infty}^{\infty}\int_{-\infty}^{\infty}\left(u_m\right)^m(x,t)J_{\3}(\xi-x,\tau-t)\,dxdt,
\end{equation*}
and
\begin{equation*}
C_{m,\3}(\xi,\tau)=\int_{-\infty}^{\infty}\int_{-\infty}^{\infty}\left(u_m\right)^p(x,t)J_{\3}(\xi-x,\tau-t)\,dxdt.
\end{equation*}
Integrating  \eqref{eq-integration-substituting-mollifier} first with respect to $\xi$ over $(x,x+h)$, $h>0$, and then with respect to $\tau$ over $(\sigma,t)$, $t>\sigma>0$, 
\begin{equation}\label{intermediate-step1}
\begin{aligned}
&\int_{\sigma}^{t}B_{m,\3}(x+h,\tau)\,d\tau-\int_{\sigma}^{t}B_{m,\3}(x,\tau)\,d\tau+\int_{\sigma}^{t}\int_{x}^{x+h}C_{m,\3}(\xi,\tau)\,d\xi d\tau\\
&\qquad \qquad =-\int_{x}^{x+h}\left(A_{m,\3}(\xi,t)-A_{m,\3}(\xi,\sigma)\right)\,d\xi+\int_{-\infty}^{\infty}\int_{x}^{x+h}\int_{\sigma}^{t}J_{\3}(\xi-z,\tau)u_0(z)\,d\tau d\xi dz.
\end{aligned}
\end{equation}
Similar to the proof on P.63--64 of \cite{X}, letting $\3\to 0$ in \eqref{intermediate-step1},
\begin{equation*}
\begin{aligned}
&\int_{\sigma}^{t}\left[u_{m}(x+h,\tau)^m-u_{m}(x,\tau)^m\right]\,d\tau\\
&\qquad \qquad =-\int_{x}^{x+h}\left(u_{m}(\xi,t)-u_{m}(\xi,\sigma)\right)\,d\xi-\int_{\sigma}^{t}\int_{x}^{x+h}u_{m}(\xi,\tau)^p\,d\xi d\tau \quad \mbox{a.e. $(x,t)\in\R\times(0,\infty)$}.
\end{aligned}
\end{equation*}
Letting $\sigma\to 0$,
\begin{equation}\label{eq-aligned-space-difference-of-psi-34}
\psi_{m}(x+h,t)-\psi_{m}(x,t)=\int_{x}^{x+h}u_{0}(\xi)\,d\xi-\int_{x}^{x+h}u_{m}(\xi,t)\,d\xi
-\int_{0}^{t}\int_{x}^{x+h}u_{m}(\xi,\tau)^p\,d\xi d\tau
\end{equation}
for a.e. $(x,t)\in\R\times(0,\infty)$.
By \eqref{eq-relation-between-solutions-u-v-with-initial-data-u-0-in-L-1} and \eqref{eq-upper-bound-of-u-m-by-v-m-the-solution-of-homogeneous-case},
\begin{align}
\int_{\R}\int_{0}^{t}\int_{x}^{x+h}u_m^p(\xi,\tau)\,d\xi d\tau dx&\leq h\int_{0}^{t}\int_{\R}u_m^p(\xi,\tau)\,d\xi d\tau\notag\\
&\leq \left(\frac{2\left\|u_0\right\|_{L^1(\R)}}{m-1}\right)^{\frac{p-1}{m}}h\int_{0}^{t}\left(\frac{1}{\tau^{\frac{p-1}{m}}}\int_{\R}u_m\,d\xi\right)d\tau\notag\\
&\leq h\left(\frac{1}{1-\frac{p-1}{m}}\right)\left(\frac{2\left\|u_0\right\|_{L^1(\R)}}{m-1}\right)^{\frac{p-1}{m}}\left\|u_0\right\|_{L^1(\R)}t^{1-\frac{p-1}{m}} \qquad \forall m>p-1.
\label{eq-aligned-control-of-term-with-u-^-p-after-some-calculus}
\end{align}
Hence, by  \eqref{eq-relation-between-solutions-u-v-with-initial-data-u-0-in-L-1}, \eqref{eq-aligned-space-difference-of-psi-34} and \eqref{eq-aligned-control-of-term-with-u-^-p-after-some-calculus},
\begin{equation}\label{eq-space-difference-of-psi}
\begin{aligned}
&\int_{\R}\left|\psi_{m}(x+h,t)-\psi_{m}(x,t)\right|\,dx\\
&\qquad \qquad \le h\left[2+\left(\frac{1}{1-\frac{p-1}{m}}\right)\left(\frac{2\left\|u_0\right\|_{L^1(\R)}}{m-1}\right)^{\frac{p-1}{m}}t^{1-\frac{p-1}{m}}\right]\left\|u_0\right\|_{L^1(\R)}
\quad\mbox{ a.e. }t>0,\quad \forall m>p-1, \,\,h>0.
\end{aligned}
\end{equation}
By \eqref{eq-relation-between-solutions-u-v-with-initial-data-u-0-in-L-1} and \eqref{eq-upper-bound-of-u-m-by-v-m-the-solution-of-homogeneous-case},
\begin{align}
\int_{\R}\left|\psi_m(x,t+h)-\psi_m(x,t)\right|\,dx&=\int_t^{t+h}\int_{\R}u_m(x,\tau)^m\,dxd\tau\notag\\
&\leq\frac{m}{m-1}(m-1)^{\frac{1}{m}}2^{1-\frac{1}{m}}\left\|u_0\right\|_{L^1(\R)}^{2-\frac{1}{m}}((t+h)^{\frac{1}{m}}-t^{\frac{1}{m}})\label{eq-time-difference-of-psi}
\end{align}
for a.e $t>0$, $h>0$, and any $m>p-1$.
By \eqref{L-infty-bd} and \eqref{eq-upper-bound-of-u-m-by-v-m-the-solution-of-homogeneous-case},
\begin{equation}\label{eq-boundedness-of-psi}
0\le \psi_m(x,t)=\int_{0}^{t}u_m(x,\tau)^m\,d\tau\leq \frac{m}{m-1}(m-1)^{\frac{1}{m}}(2\left\|u_0\right\|_{L^1(\R)})^{\frac{m-1}{m}}\|u_0\|_{L^{\infty}(\R)}t^{\frac{1}{m}}\quad\mbox{ a.e. }x\in\R,t>0.
\end{equation}
By \eqref{eq-space-difference-of-psi}, \eqref{eq-time-difference-of-psi} and \eqref{eq-boundedness-of-psi},  the lemma follows.
\end{proof}

By \eqref{eq-boundedness-of-psi} and Lemma \ref{lem-precompactness-of-int-u-sub-m-to-m-over-o-to-t} the sequence $\{\psi_{m_i}\}_{i=1}^{\infty}$ has a subsequence which we may assume without loss of generality to the sequence itself such that $\psi_{m_i}$ converges in $C([0,T];L^1(\R))$ to some function $0\le\psi\in C([0,\infty);L^1(\R))\cap L^{\infty}(\R\times (0,T))$ for any $T>0$ as $i\to\infty$.

By an argument similar to the proof of Theorem 1.2 of \cite{X}, the following result holds.

\begin{prop}\label{eq-proposition-independent-of-t-in-psi}
The function $\psi$ is independent of $t$.
\end{prop}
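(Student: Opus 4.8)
The plan is to show that the distributional time derivative of $\psi$ vanishes on $\R\times(0,\infty)$, and then to upgrade this to genuine independence of $t$ using the temporal continuity $\psi\in C([0,\infty);L^1(\R))$ obtained above. By construction $\psi_m(x,t)=\int_0^t u_m(x,\tau)^m\,d\tau$, so for a.e.\ $x$ the function $t\mapsto\psi_m(x,t)$ is absolutely continuous with $\partial_t\psi_m(x,t)=u_m(x,t)^m$. Hence for any $\eta\in C_0^{\infty}(\R\times(0,\infty))$, integrating by parts in $t$ (the boundary terms vanishing since $\eta$ has compact support in time and $\psi_m(x,0)=0$) gives
\begin{equation*}
\int_{0}^{\infty}\int_{-\infty}^{\infty}\psi_m\,\eta_t\,dx\,dt=-\int_{0}^{\infty}\int_{-\infty}^{\infty}u_m^m\,\eta\,dx\,dt.
\end{equation*}

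Next I would show that the right hand side tends to $0$ as $m\to\infty$. Fix $R>0$ and $0<t_0<T$ with $\supp\eta\subset B_R\times[t_0,T]$. By \eqref{eq-upper-bound-of-u-m-by-v-m-the-solution-of-homogeneous-case} one has $u_m^m\le \frac{2\left\|u_0\right\|_{L^1(\R)}}{(m-1)t_0}$ on $\supp\eta$, and therefore
\begin{equation*}
\left|\int_{0}^{\infty}\int_{-\infty}^{\infty}u_m^m\,\eta\,dx\,dt\right|\le \left\|\eta\right\|_{L^{\infty}}\,2R\,(T-t_0)\,\frac{2\left\|u_0\right\|_{L^1(\R)}}{(m-1)t_0}\longrightarrow 0\quad\mbox{ as }m\to\infty.
\end{equation*}

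Finally, since $\psi_{m_i}\to\psi$ in $C([0,T];L^1(\R))$ for every $T>0$, the left hand side passes to the limit: taking $m=m_i$ and letting $i\to\infty$ gives $\int_{0}^{\infty}\int_{-\infty}^{\infty}\psi_{m_i}\eta_t\,dx\,dt\to\int_{0}^{\infty}\int_{-\infty}^{\infty}\psi\,\eta_t\,dx\,dt$. Combining the two displays we obtain
\begin{equation*}
\int_{0}^{\infty}\int_{-\infty}^{\infty}\psi\,\eta_t\,dx\,dt=0\qquad\forall\,\eta\in C_0^{\infty}(\R\times(0,\infty)),
\end{equation*}
that is, $\partial_t\psi=0$ in $\mathcal{D}'(\R\times(0,\infty))$. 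Since $\psi\in C([0,\infty);L^1(\R))$, this forces $\psi(\cdot,t)=\psi(\cdot,s)$ in $L^1(\R)$ for all $t,s\ge 0$, so $\psi$ is independent of $t$.

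The argument is a decay estimate followed by a limit passage, so I do not expect a serious obstacle once \eqref{eq-upper-bound-of-u-m-by-v-m-the-solution-of-homogeneous-case} is available. The only points demanding care are the justification of the integration by parts, which is immediate because $\psi_m$ is by definition the time integral of $u_m^m$, and the passage from the distributional identity $\partial_t\psi=0$ to pointwise independence of $t$; the $C([0,\infty);L^1(\R))$ regularity of $\psi$ is precisely what rules out the limit differing on a set of times of measure zero.
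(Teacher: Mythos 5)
Your proof is correct, and every ingredient you use is available at this point in the paper: the identity $\partial_t\psi_m=u_m^m$ is immediate from the definition of $\psi_m$, the decay of $u_m^m$ on $\supp\eta$ is exactly \eqref{eq-upper-bound-of-u-m-by-v-m-the-solution-of-homogeneous-case}, and the limit passage on the left-hand side only needs convergence of $\psi_{m_i}$ to $\psi$ in $C([t_0,T];L^1(\R))$. The paper itself does not write out a proof but invokes the argument of Theorem 1.2 of \cite{X}, which is a direct estimate rather than a duality argument: for fixed $0<t_1<t_2$, the already-established inequality \eqref{eq-time-difference-of-psi} gives
\begin{equation*}
\int_{\R}\left|\psi_m(x,t_2)-\psi_m(x,t_1)\right|\,dx\le \frac{m}{m-1}(m-1)^{\frac{1}{m}}2^{1-\frac{1}{m}}\left\|u_0\right\|_{L^1(\R)}^{2-\frac{1}{m}}\left(t_2^{\frac{1}{m}}-t_1^{\frac{1}{m}}\right)\longrightarrow 0\quad\mbox{ as }m\to\infty,
\end{equation*}
since $t^{1/m}\to 1$ for each fixed $t>0$; taking $m=m_i\to\infty$ then yields $\psi(\cdot,t_1)=\psi(\cdot,t_2)$ in $L^1(\R)$ at once, with no need to pass through $\mathcal{D}'(\R\times(0,\infty))$ and then upgrade. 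Both proofs rest on the same mechanism (the bounds forcing $u_m^m\to 0$ away from $t=0$); yours trades a one-line estimate for the standard ``vanishing distributional time derivative plus continuity in time'' lemma, which is fine but slightly longer.

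One caution about your final sentence: you should only assert $\psi(\cdot,t)=\psi(\cdot,s)$ for $t,s>0$, not for all $t,s\ge 0$. Since $\psi_m(\cdot,0)\equiv 0$ for every $m$, uniform-in-time convergence down to $t=0$ combined with constancy on all of $[0,\infty)$ would force $\psi\equiv 0$, which is false in general: when $u_0>1$ on a set of positive measure, \eqref{p-limit-initial-value} requires $\psi\not\equiv 0$. The modulus in \eqref{eq-time-difference-of-psi} behaves like $h^{1/m}$ at $t=0$ and so is not uniform in $m$ there; the equicontinuity and the convergence of $\psi_{m_i}$ are genuine only on intervals $[t_0,T]$ with $t_0>0$. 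This imprecision originates in the paper's own statement of the convergence, but your argument should not lean on the value of $\psi$ at $t=0$; the content of the proposition is independence of $t$ on $(0,\infty)$, which your distributional argument already delivers.
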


By Proposition \ref{eq-proposition-independent-of-t-in-psi}, $0\le\psi\in L^1(\R)\cap L^{\infty}(\R)$.
We are now ready for the proof of Theorem \ref{main-thm1}.

\begin{proof}[\textbf{Proof of Theorem \ref{main-thm1}}]
By the previous arguments it remains to prove the uniqueness of $u_{\infty}$. Let $\eta\in C^{\infty}_0(\R^2)$. We first claim that 
\begin{equation}\label{eq-aligned-claim-for-intetgration-by-oparts-of-psi}
\lim_{i\to\infty}\int_{0}^{\infty}\int_{\R}\left(u_{m_i}\right)^{m_i}\eta_x\,dxdt=\int_{\R}\psi(x)\eta_{x}(x,0)\,dx.
\end{equation}
To prove the claim we choose $R_0>0$, $T_0>0$, such that  
$\text{supp }\,\eta\subset[-R_0,R_0]\times[-T_0,T_0]$.
Then
\begin{align}
\int_{0}^{\infty}\int_{\R}\left(u_{m_i}\right)^{m_i}\eta_x\,dxdt&=\int_{\delta}^{T_0}\int_{\R}\left(u_{m_i}\right)^{m_i}\eta_x\,dxdt+\int_{0}^{\delta}\int_{\R}\left(u_{m_i}\right)^{m_i}(\eta_x(x,t)-\eta_x(x,0))\,dxdt\notag\\
&\quad +\int_{\R}\eta_x(x,0)\left(\int_{0}^{\delta}u_{m_i}(x,t)^{m_i}\,dt\right)\,dx\notag\\
&=I_1+I_2+I_3 \qquad \qquad \forall 0<\delta<T_0.\label{eq-aligned-split-for-intetgration-by-oparts-of-osi}
\end{align}
By \eqref{eq-upper-bound-of-u-m-by-v-m-the-solution-of-homogeneous-case}, 
\begin{equation}\label{eq-control-of-I-1-of-sprit-of-integration-for-psi}
I_1 \to 0\qquad  \mbox{as $i\to\infty$}.
\end{equation}
By the mean value theorem, for any $x\in\R$, $t>0$, there exists a constant $t_x\in(0,t)$ such that
\begin{equation*}
\eta_x(x,t)-\eta_x(x,0)=t\eta_{xt}(x,t_x).
\end{equation*}
Then by \eqref{eq-upper-bound-of-u-m-by-v-m-the-solution-of-homogeneous-case},
\begin{align}\label{eq-control-of-I-2-of-sprit-of-integration-for-psi}
|I_2|&=\left|\int_{0}^{\delta}\int_{\R}\left(u_{m_i}\right)^{m_i}\eta_{xt}(x,t_x)\,t\,dxdt\right|
\le\int_0^{\delta}\int_{-R_0}^{R_0}\left(\frac{2\|u_0\|_{L^1(\R)}\|\eta_{xt}\|_{L^{\infty}(\R)}}{m_i-1}\right)\,dxdt\notag\\
&\le\frac{4\delta R_0\|u_0\|_{L^1(\R)}\|\eta_{xt}\|_{L^{\infty}(\R)}}{m_i-1}\to 0\qquad\mbox{as $i\to\infty$}.
\end{align}
By Proposition \ref{eq-proposition-independent-of-t-in-psi},
\begin{equation}\label{eq-control-of-I-3-of-sprit-of-integration-for-psi}
\int_0^{\delta}u_{m_i}(x,t)^{m_i}\,dt\to\psi(x) \qquad \mbox{in $L^1(\R)$} \qquad  \mbox{as $i\to\infty$}.
\end{equation}
Letting $i\to\infty$ in \eqref{eq-aligned-split-for-intetgration-by-oparts-of-osi}, by \eqref{eq-control-of-I-1-of-sprit-of-integration-for-psi}, \eqref{eq-control-of-I-2-of-sprit-of-integration-for-psi} and \eqref{eq-control-of-I-3-of-sprit-of-integration-for-psi}, the claim \eqref{eq-aligned-claim-for-intetgration-by-oparts-of-psi} follows.

Since $u_{\infty}$ satisfies \eqref{p-power-ode},  $u_{\infty}(x,t)$ is monotone decreasing in $t>0$. Hence 
\begin{equation*}
u_{\infty}^0(x):=u_{\infty}(x,0)=\lim_{t\to 0}u_{\infty}(x,t) \qquad \mbox{exists.}
\end{equation*}
Putting $m=m_i$ in \eqref{eq-integration-multiplying-test-function-and-by-parts--1} and letting $i\to\infty$,
\begin{equation}\label{eq-for-initial-of-u-sub-infty-at-t-=-0-after-limit}
\int_{0}^{\infty}\int_{\R}u_{\infty}\eta_t\,dxdt+\int_{\R}u_0(x)\eta(x,0)\,dx+\int_{\R}\psi(x)\eta_x(x,0)\,dx=\int_{0}^{\infty}\int_{\R}\left(u_{\infty}\right)^p\eta\,dxdt\qquad\forall 0\le\eta\in C_0^{\infty}(\R^2).
\end{equation}
We now choose $\phi\in C^{\infty}(\R)$, $0\le\phi\le 1$, such that $\phi(r)=0$ for all  $r\le-1$ and 
$\phi(r)=1$ for all $r\ge 0$ and let $\phi_{\3}(r)=\phi(r/\3)$ for any $r\in\R$ and $\3>0$. For any $\eta\in C_{0}^{\infty}(\R)$ and $t_0>0$, by replacing $\eta$ by $\phi_{\3}(t)\phi_{\3}(t_0-t)\eta(x)$ in \eqref{eq-for-initial-of-u-sub-infty-at-t-=-0-after-limit} and letting $\3\to 0$, we have
\begin{equation}\label{eq-for-initial-of-u-sub-infty-at-t-=-0-after-epsilon-limit}
-\int_{\R}u_{\infty}(x,t_0)\eta(x)\,dx+\int_{\R}u_0(x)\eta(x)\,dx+\int_{\R}\psi(x)\eta_x(x)\,dx=\int_{0}^{t_0}\int_{\R}u_{\infty}^p\eta\,dxdt.
\end{equation}
Letting $t_0\to 0$ in \eqref{eq-for-initial-of-u-sub-infty-at-t-=-0-after-epsilon-limit}, by the monotone convergence theorem,
\begin{equation*}
-\int_{\R}u_{\infty}^0(x)\eta(x)\,dx+\int_{\R}u_0(x)\eta(x)\,dx+\int_{\R}\psi(x)\eta_x(x)\,dx=0\quad\forall\eta\in C_0^{\infty}(\R)
\end{equation*}
and \eqref{p-limit-initial-value} holds.
We are now going to prove \eqref{psi=0-eqn}. For any $k>1$ let $\eta_k(x)=\phi(x+k)\phi(k-x)$. Then
$0\le\phi_k\le 1$, $\eta_k(x)=1$ for any  $|x|\le k$ and $\eta_{k}(x)=0$ for any  $|x|\ge k+1$. 
By \eqref{p-limit-initial-value} there exists a constant $C>0$ such that
\begin{equation}\label{eq-for-energy-conservation-of-u-infty-zro}
\left|\int_{\R}u_{\infty}^0\eta_k\,dx-\int_{\R}u_0\eta_k\,dx\right|\le C\int_{k\leq |x|\leq k+1}\psi\,dx\quad\forall k>1.
\end{equation}
Since $\psi\in L^1(\R)$, letting $k\to\infty$ in \eqref{eq-for-energy-conservation-of-u-infty-zro}, 
\begin{equation}\label{eq-energy-conservation-of-u-infty-zzero}
\int_{\R}u_{\infty}^0\,dx=\int_{\R}u_0\,dx.
\end{equation}
We now recall that by the result of \cite{X}, 
\begin{equation*}
v_m(x,t)\to v_{\infty}(x)\quad\mbox{ and }\quad\int_0^{t}v_m(x,t)^m\,dt\to \4{\psi}(x)\qquad \mbox{as }
m\to \infty\quad\mbox{ in }L^1_{loc}(\R\times(0,\infty))
\end{equation*}
for some functions $v_{\infty}(x)$, $\4{\psi}(x)$, which satisfy
\begin{equation}\label{eq-aligned-contition-of-v-infty-boundedness-and-envergy-conservation}
\begin{aligned}
 0\leq v_{\infty}(x)\leq 1, \qquad \int_{\R}v_{\infty}\,dx=\int_{\R}u_0\,dx,\qquad 0\le\4{\psi}(x)\in L^1(\R)\cap 
 L^{\infty}(\R), 
\end{aligned}
\end{equation}
and
\begin{equation}\label{eq-aligned-eq-for-v-infty-with-tilde-psi}
\begin{aligned}
v_{\infty}(x)+\4{\psi}_x(x)=u_0(x) \quad \mbox{in $\mathcal{D}'(\R)$}, 
\end{aligned}
\end{equation}
with 
\begin{equation}\label{eq-energy-conservation-of-v-infty-zzero}
\4{\psi}(x)=0 \quad\mbox{ a.e. }x\in\{x\in\R:v_{\infty}(x)<1\}.
\end{equation}
Since $u_m(x,t)\leq v_m(x,t)$, we have 
\begin{equation}\label{psi-and-psi-tilde-compare}
0\leq \psi\leq\4{\psi}
\end{equation} 
and
\begin{equation}\label{eq-comparison-between-v-infty-zero-and-u-infty}
u_{\infty}(x,t)\leq v_{\infty}(x)\qquad \Rightarrow \qquad u_{\infty}^0(x)\leq v_{\infty}(x).
\end{equation}
By \eqref{eq-energy-conservation-of-u-infty-zzero} and \eqref{eq-aligned-contition-of-v-infty-boundedness-and-envergy-conservation}, 
\begin{equation}\label{eq-no-difference-between-u-infty-and-v-infty-in-L-1}
\int_{\R}u_{\infty}^0(x)\,dx=\int_{\R}v_{\infty}(x)\,dx.
\end{equation}
By \eqref{eq-comparison-between-v-infty-zero-and-u-infty} and \eqref{eq-no-difference-between-u-infty-and-v-infty-in-L-1},
\begin{equation}\label{eq-comparition-between-u-infty-0-and-v-infty-not-L-1-spoace}
u_{\infty}^0(x)=v_{\infty}(x) \quad\mbox{ a.e. }x\in\R.
\end{equation}
By \eqref{eq-energy-conservation-of-v-infty-zzero}, \eqref{psi-and-psi-tilde-compare}  and \eqref{eq-comparition-between-u-infty-0-and-v-infty-not-L-1-spoace}, we get \eqref{psi=0-eqn}.  By the discussion on P.70 of \cite{X}, $u_{\infty}^0$ is uniquely determined by \eqref{p-limit-initial-value} and \eqref{psi=0-eqn}. Since $u_{\infty}$ satisfies \eqref{p-power-ode} with initial value $u_{\infty}^0$, the function $u_{\infty}$ is unique. Since the sequence $\{m_i\}_{i=1}^{\infty}$ is arbitrary, $u_{m}$ converges to $u_{\infty}$ in $C([t_0,T];L_{loc}^1(\R))$ for any $T>t_0>0$ as $m\to\infty$ and  Theorem \ref{main-thm1} follows.
\end{proof}

\section{Singular limit as $p\to\infty$}\label{section-p-infty}
\setcounter{equation}{0}
\setcounter{thm}{0}

In this section we will prove Theorem \ref{main-thm2}. We will fix $m>1$ and write $w_p:=u_{m,p}$ for any $p>1$. 
We will assume that $0\le u_0\in L^{\infty}(\R)$ in this section.

\begin{lemma}\label{lem-upper-bound-of-u-m-p-in-section-4}
$w_p$ satisfies 
\begin{equation}\label{eq-bound-of-characteristic-solutions-for-each-m}
w_p(x,t)\leq \frac{1}{\left((p-1)t+\left\|u_0\right\|^{1-p}_{L^{\infty}}\right)^{\frac{1}{p-1}}} \qquad \mbox{a.e.}\,\, (x,t)\in\R\times(0,\infty)\quad\forall p>1. 
\end{equation}
\end{lemma}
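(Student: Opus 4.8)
The plan is to dominate $w_p$ by the spatially homogeneous solution obtained by discarding the convection term and integrating the absorption ODE alone. First I would solve the initial value problem $V'(t)=-V(t)^p$, $V(0)=\|u_0\|_{L^{\infty}(\R)}$; separating variables gives $V(t)^{1-p}=(p-1)t+\|u_0\|_{L^{\infty}}^{1-p}$, i.e.
\begin{equation*}
V(t)=\frac{1}{\left((p-1)t+\|u_0\|_{L^{\infty}}^{1-p}\right)^{\frac{1}{p-1}}},
\end{equation*}
which is exactly the right-hand side of \eqref{eq-bound-of-characteristic-solutions-for-each-m}. Since $\|u_0\|_{L^{\infty}}^{1-p}>0$, the base $(p-1)t+\|u_0\|_{L^{\infty}}^{1-p}$ stays bounded away from $0$, so $V$ is smooth and positive on $[0,\infty)$. (If $\|u_0\|_{L^{\infty}}=0$ then $w_p\equiv 0$ and there is nothing to prove.)

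The key observation is that the constant-in-space function $\widetilde{V}(x,t):=V(t)$ is itself a solution of \eqref{burgers-absorption-eqn} with the constant initial datum $\|u_0\|_{L^{\infty}}$: because $\widetilde{V}$ is independent of $x$ we have $(\widetilde{V}^{m})_x\equiv 0$, so the flux term drops out and $\widetilde{V}$ simply satisfies $\partial_t\widetilde{V}=-\widetilde{V}^{p}$. Being smooth, $\widetilde{V}$ satisfies the entropy condition (i) with equality and the initial condition (ii) trivially, hence by Lemma \ref{thm-existence-of-general-solution} it is \emph{the} unique solution of \eqref{eq-cases-main-problem-general} with $\varphi(u)=-u^p$ and initial value the constant $\|u_0\|_{L^{\infty}}$.

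It then remains to invoke comparison. Since $u_0(x)\le\|u_0\|_{L^{\infty}}$ for a.e.\ $x\in\R$, I would conclude $w_p(x,t)\le\widetilde{V}(x,t)=V(t)$ a.e., which is precisely \eqref{eq-bound-of-characteristic-solutions-for-each-m}. The comparison principle used here is the one-sided (Kato) companion of the $L^1$ contraction in Lemma \ref{lem-L-1-contraction-of-solutions-u-1--and-u-2-2349}: the doubling-of-variables argument of \cite{K} yields, for two solutions with data $u_{0,1}\le u_{0,2}$,
\begin{equation*}
\int_{B_{R-Nt}}\left(u_1-u_2\right)^+(x,t)\,dx\le\int_{B_R}\left(u_{0,1}-u_{0,2}\right)^+(x)\,dx=0,
\end{equation*}
so that $u_1\le u_2$ a.e.\ on the domains of dependence and hence everywhere.

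The main point requiring care is that the constant datum $\|u_0\|_{L^{\infty}}$ is not in $L^1(\R)$, so Lemma \ref{lem-L-1-contraction-of-solutions-u-1--and-u-2-2349} cannot be applied verbatim; one must instead use the localized inequality above, which is legitimate by finite speed of propagation with $N=m\|u_0\|_{L^{\infty}}^{m-1}$. Here the sign of the absorption works in our favour: in the Kato inequality the source contributes $\mathrm{sign}^+(u_1-u_2)\bigl(-u_1^p+u_2^p\bigr)\le 0$ wherever $u_1>u_2\ge 0$, so the right-hand side is non-positive and no Gronwall step is needed to keep the one-sided mass at zero. I expect this verification of comparison for merely bounded (non-integrable) data to be the only genuine obstacle; the ODE computation and the check that $\widetilde{V}$ is an entropy solution are routine.
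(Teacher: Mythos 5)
Your proof is correct, but it takes a genuinely different route from the paper. The paper never works at the level of entropy solutions: it observes that the $x$-independent function $h(x,t)=V(t)$ solves the \emph{viscous} problem $u_t+(u^m)_x=\3 u_{xx}-u^p$ with constant initial data $\|u_0\|_{L^\infty(\R)}$ for every $\3>0$, compares it with the solution $u^{\3}_{m,p}$ of \eqref{epsilon-approx-eqn} by the classical parabolic maximum principle, and then passes to the limit $\3\to 0^+$, using the fact that in the construction of \cite{K} the entropy solution $w_p$ is the a.e.\ limit of the viscous approximations. You instead stay at the hyperbolic level: you note that $V(t)$ is itself a (smooth, hence entropy) solution of \eqref{burgers-absorption-eqn} with constant data, and invoke a one-sided Kato/Kruzkov comparison principle for ordered data. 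Both arguments are sound; the trade-off is that the paper's route needs nothing beyond the textbook parabolic maximum principle plus the vanishing-viscosity construction it already relies on, whereas yours requires the one-sided $(\cdot)^+$ comparison inequality, which is \emph{not} among the lemmas the paper states (Lemma \ref{lem-L-1-contraction-of-solutions-u-1--and-u-2-2349} is the two-sided contraction, and with ordered but unequal data $\int_{B_R}|u_{0,1}-u_{0,2}|\,dx>0$, so it gives no ordering information) and is not literally Theorem 1 of \cite{K}; you would have to rerun the doubling-of-variables argument with $\mathrm{sign}^+$ in place of $\mathrm{sign}$. That is standard and your observation that the absorption term has the favourable sign (so no Gronwall factor appears) is exactly right, but it is the one piece of machinery you must supply. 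One small correction: your stated reason that Lemma \ref{lem-L-1-contraction-of-solutions-u-1--and-u-2-2349} ``cannot be applied verbatim'' (the constant datum not being in $L^1(\R)$) is not the real obstruction --- that lemma is already localized over cones and assumes only $L^\infty$ data; the obstruction is precisely that a two-sided contraction cannot yield a one-sided comparison.
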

\begin{proof}
By direct computation, the function
\begin{equation*}
h(x,t)=\frac{1}{\left((p-1)t+\left\|u_0\right\|_{L^{\infty}(\R)}^{1-p}\right)^{\frac{1}{p-1}}}
\end{equation*}
satisfies
\begin{equation*}
\begin{aligned}
\begin{cases}
&u_t+\left(u^m\right)_x=\3 u_{xx}-u^p \qquad \mbox{in $\R\times(0,\infty)$}\\
&\quad u(x,0)=\left\|u_0\right\|_{L^{\infty}(\R)}. \qquad \mbox{ in }\R
\end{cases}
\end{aligned}
\end{equation*}
for any $\3>0$. Let $u_{m,p}^{\3}(x,t)$ be the solution of  the problem 
\begin{equation}\label{epsilon-approx-eqn}
\left\{\begin{aligned}
u_t+(u^m)_x=&\3 u_{xx}-u^p \quad \mbox{in $\R\times(0,\infty)$}\\
u(x,0)=&u_0(x) \qquad\,\,\, \mbox{ in }\R.
\end{aligned}\right.
\end{equation}
By the construction of solution in \cite{K}, $u_{m,p}^{\3}$ converges almost everywhere in $\R\times(0,\infty)$ to $w_p$ as $\3\to 0^+$. By the maximum principle for parabolic equation,
\begin{align*}
&u_{m,p}^{\3}(x,t)\leq h(x,t) \quad\forall (x,t)\in\R\times(0,\infty)\\
\Rightarrow \quad &w_p(x,t)\leq h(x,t) \qquad \mbox{a.e.}\,\,(x,t)\in\R\times(0,\infty) \qquad \mbox{as $\3\to 0^+$}
\end{align*}
and the lemma follows.
\end{proof}

Let $\left\{p_i\right\}_{i=1}^{\infty}\subset\R^+$, $p_i>2$, $\forall i,\cdots$, be such that $p_i\to\infty$ as $i\to\infty$. By \eqref{eq-bound-of-characteristic-solutions-for-each-m} for any $t_0>0$,
\begin{equation}\label{eq-bound-of-u-m-p-to-p---1-by-somterm-with-p-3}
p\|w_p\|^{p-1}_{L^{\infty}([t_0,\infty))}\leq \frac{p}{(p-1)t_0}\quad\forall p>1.
\end{equation}
By \eqref{eq-bound-of-u-m-p-to-p---1-by-somterm-with-p-3} for any $R>0$, $T>t_0>0$, we can choose the function $\omega_R$
in Lemma \ref{lemma-L-1-contraction-of-soutions} to be independent of $p\ge 2$. Hence by Lemma \ref{lemma-L-1-contraction-of-soutions} the sequence $\left\{w_{p_i}\right\}_{i=1}^{\infty}$ is equi-continuous in $C([t_0,T];L^1_{loc}(\R))$ for any $T>t_0>0$. Hence by \eqref{L-infty-bd}, the Ascoli theorem and a diagonalization argument the sequence $\left\{w_{p_i}\right\}_{i=1}^{\infty}$ has a subsequence which we may assume without loss of generality to be the sequence itself that converges  in $C([t_0,T];L_{loc}^1\left(\R\right))$ for any $T>t_0>0$ to some non-negative function $w_{\infty}\in C((0,\infty);L_{loc}^1(\R))\cap L^{\infty}(\R\times(0,\infty))$  as $i\to\infty$. Putting $p=p_i$ in \eqref{eq-bound-of-characteristic-solutions-for-each-m} and letting $i\to\infty$, 
\begin{equation}\label{w-infty-bd}
w_{\infty}\leq 1\qquad \mbox{a.e. in $\R\times(0,\infty)$}.
\end{equation}

\begin{lemma}\label{p-infty-limit-integral-eqn-lem}
$w_{\infty}$ satisfies 
\begin{equation}\label{eq-for-u-infty-for-kruzkov-condition-234}
\int_{0}^T\int_{\R}\{|w_{\infty}(x,t)-k|\eta_t+|w_{\infty}(x,t)^m-k^m|\eta_x\}\,dxdt\ge 0\qquad\forall k\in\R,\,\,0\le \eta\in C^{\infty}_0(\R\times(0,\infty)).
\end{equation}
\end{lemma}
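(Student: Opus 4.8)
The plan is to pass to the limit $i\to\infty$ in the Kru\v{z}kov-type entropy inequality (condition (i) of the definition of solution) satisfied by each $w_{p_i}$ with $\varphi(u)=-u^{p_i}$, and to show that the contribution of the absorption term disappears in the limit. Fix $k\in\R$ and $0\le\eta\in C_0^{\infty}(\R\times(0,\infty))$, and choose $R_0,T_0>0$ and $t_0>0$ so that $\supp\eta\subset[-R_0,R_0]\times[t_0,T_0]$. Writing condition (i) for $w_{p_i}$ with $\varphi(u)=-u^{p_i}$ and moving the absorption term to the right-hand side gives
\begin{equation}\label{eq-proposal-entropy-wp}
\int_{0}^T\int_{\R}\{|w_{p_i}-k|\eta_t+|w_{p_i}^m-k^m|\eta_x\}\,dxdt\ge\int_{0}^T\int_{\R}\mbox{sign}(w_{p_i}-k)\,w_{p_i}^{p_i}\eta\,dxdt
\end{equation}
for every $i$. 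It then suffices to show that, as $i\to\infty$, the left-hand side of \eqref{eq-proposal-entropy-wp} converges to the left-hand side of \eqref{eq-for-u-infty-for-kruzkov-condition-234} while the right-hand side tends to $0$.

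For the left-hand side I would use the bound $0\le w_{p_i}\le\|u_0\|_{L^\infty(\R)}$ from \eqref{L-infty-bd} together with the convergence $w_{p_i}\to w_{\infty}$ in $C([t_0,T_0];L^1_{loc}(\R))$, hence in $L^1(\supp\eta)$. Since $s\mapsto|s-k|$ and $s\mapsto|s^m-k^m|$ are continuous and the $w_{p_i}$ are uniformly bounded, a standard subsequence argument together with the dominated convergence theorem shows that $|w_{p_i}-k|\to|w_{\infty}-k|$ and $|w_{p_i}^m-k^m|\to|w_{\infty}^m-k^m|$ in $L^1(\supp\eta)$. Multiplying by the bounded functions $\eta_t$, $\eta_x$ and integrating, the left-hand side of \eqref{eq-proposal-entropy-wp} converges to $\int_0^T\int_{\R}\{|w_{\infty}-k|\eta_t+|w_{\infty}^m-k^m|\eta_x\}\,dxdt$.

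The hard part will be the right-hand side of \eqref{eq-proposal-entropy-wp}, which involves the absorption term raised to the $p_i$-th power. Here I would invoke Lemma \ref{lem-upper-bound-of-u-m-p-in-section-4}: since the bound \eqref{eq-bound-of-characteristic-solutions-for-each-m} is independent of $x$ and decreasing in $t$, for $t\ge t_0$ we obtain
\begin{equation*}
0\le w_{p_i}(x,t)^{p_i}\le\frac{1}{\left((p_i-1)t_0+\|u_0\|_{L^\infty(\R)}^{1-p_i}\right)^{p_i/(p_i-1)}}.
\end{equation*}
Because $(p_i-1)t_0\to\infty$ while the exponent $p_i/(p_i-1)\to1$, the right-hand side tends to $0$ as $i\to\infty$, uniformly on the slab $\{t\ge t_0\}$ and in particular on $\supp\eta$. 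Bounding $|\mbox{sign}(w_{p_i}-k)|\le1$ and $0\le\eta\le\|\eta\|_{L^\infty}$ then yields
\begin{equation*}
\left|\int_{0}^T\int_{\R}\mbox{sign}(w_{p_i}-k)\,w_{p_i}^{p_i}\eta\,dxdt\right|\le\|\eta\|_{L^\infty}\,|\supp\eta|\,\sup_{\supp\eta}w_{p_i}^{p_i}\longrightarrow0.
\end{equation*}

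Letting $i\to\infty$ in \eqref{eq-proposal-entropy-wp} and combining these two limits gives \eqref{eq-for-u-infty-for-kruzkov-condition-234}. The only genuinely delicate point is the uniform decay of $w_{p_i}^{p_i}$ just established, which hinges on $\supp\eta$ being bounded away from $t=0$; the rest is a routine passage to the limit built on the uniform $L^\infty$ bound and the $L^1_{loc}$ convergence already in hand.
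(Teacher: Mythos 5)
Your proposal is correct and takes essentially the same approach as the paper: both pass to the limit in the Kru\v{z}kov entropy inequality for $w_{p_i}$, handle the first two terms via the uniform $L^{\infty}$ bound and the $C([t_0,T];L^1_{loc}(\R))$ convergence, and kill the absorption term using the bound of Lemma \ref{lem-upper-bound-of-u-m-p-in-section-4}, which makes $w_{p_i}^{p_i}\to 0$ uniformly on $\supp\eta$ precisely because $\supp\eta$ is bounded away from $t=0$. The paper's proof is exactly this argument, stated more tersely (it drops the $\|u_0\|_{L^{\infty}}^{1-p}$ term to get the simpler bound $w_p^p\le((p-1)t_0)^{-p/(p-1)}$, but the conclusion is identical).
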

\begin{proof}
Let $0\leq\eta\in C^{\infty}_0(\R\times(0,\infty))$. Since $w_p$ is the solution of \eqref{burgers-absorption-eqn}, 
\begin{equation}\label{eq-kruzkov-condition=-for-the-solution-u-mp-p3-4}
\int_{0}^T\int_{\R}\{|w_p(x,t)-k|\eta_t+|w_p(x,t)^m-k^m|\eta_x-\mbox{sign}\,(w_p(x,t)-k)w_p(x,t)^p\eta\}\,dxdt\geq 0\quad\forall k\in\R.
\end{equation} 
We now choose $T>t_0>0$ and $R_1>0$ such that 
\begin{equation*}
\mbox{supp}\,\eta\subset B_{R_1}\times(t_0,T). 
\end{equation*}
By \eqref{eq-bound-of-characteristic-solutions-for-each-m},
\begin{equation}\label{eq-bound-u-m-p-to-p}
w_p(x,t)^p\leq \frac{1}{((p-1)t_0)^{\frac{p}{p-1}}} \qquad \mbox{a.e. $(x,t)\in\R\times [t_0,\infty)$},\,\,\forall p>1.
\end{equation}
Since the right hand side of \eqref{eq-bound-u-m-p-to-p} converges to $0$ as $p\to\infty$, letting $p=p_i$ and $i\to\infty$ in \eqref{eq-kruzkov-condition=-for-the-solution-u-mp-p3-4}, by \eqref{L-infty-bd}, \eqref{eq-bound-u-m-p-to-p} and the Lebesgue Dominated Convergence Theorem, \eqref{eq-for-u-infty-for-kruzkov-condition-234} follows.
\end{proof}

\begin{lemma}\label{lem-inital-convergence-where-u-0-is-=strictly-less-than-1}
Let $0\le u_0\in L^{\infty}(\R)\cap C(\R)$. Suppose there exists $x_0\in\R$ and $\delta>0$ such that
\begin{equation*}
u_0(x)<1, \qquad \forall x\in \overline{B_{2\delta}(x_0)}.
\end{equation*}
Then,
\begin{equation}\label{eq-L-1-norm-of-difference-between-u-m-p-and-u-0-in=distribution-sense}
\lim_{t\to 0}\left\|w_{\infty}(\cdot,t)-u_0(x)\right\|_{L^1(B_{\delta}(x_0))}=0.
\end{equation} 
\end{lemma}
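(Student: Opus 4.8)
The plan is to reduce the assertion to the (known) attainment of the initial datum by the pure Burgers flow with a continuous, sub-unit datum, using finite propagation speed to localize and the upper bound of Lemma~\ref{lem-upper-bound-of-u-m-p-in-section-4} to render the absorption negligible as $p\to\infty$. Since $u_0\in C(\R)$ and $u_0<1$ on the compact set $\overline{B_{2\delta}(x_0)}$, there is a constant $\theta\in(0,1)$ with $u_0\le\theta$ on $\overline{B_{2\delta}(x_0)}$. Put $u_{0,2}:=\min(u_0,\theta)$, so that $u_{0,2}\equiv u_0$ on $\overline{B_{2\delta}(x_0)}$ while $0\le u_{0,2}\le\theta<1$ on all of $\R$. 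Let $w_p^{(2)}$ and $v_m^{(2)}$ be, respectively, the solution of \eqref{burgers-absorption-eqn} and the solution of \eqref{burgers-eqn0} with initial datum $u_{0,2}$, both given by Lemma~\ref{thm-existence-of-general-solution}, and set $N:=m\left\|u_0\right\|_{L^{\infty}(\R)}^{m-1}$, which dominates every characteristic speed occurring below since $\left\|u_{0,2}\right\|_{L^{\infty}}\le\left\|u_0\right\|_{L^{\infty}}$.

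\emph{Step 1 (localization and upper bound).} By the translation invariance of \eqref{burgers-absorption-eqn}, Lemma~\ref{lem-L-1-contraction-of-solutions-u-1--and-u-2-2349} applies to balls centred at $x_0$; applied to $w_p$ and $w_p^{(2)}$ on $B_{2\delta}(x_0)$ it gives, since $u_0\equiv u_{0,2}$ there,
\[
\int_{B_{2\delta-Nt}(x_0)}\left|w_p(\cdot,t)-w_p^{(2)}(\cdot,t)\right|dx\le\int_{B_{2\delta}(x_0)}\left|u_0-u_{0,2}\right|dx=0,\qquad 0<t<2\delta/N.
\]
Hence $w_p=w_p^{(2)}$ a.e. on $B_{2\delta-Nt}(x_0)$, and in particular $w_p=w_p^{(2)}$ a.e. on $B_\delta(x_0)$ for all $t\in(0,\delta/N)$. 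Moreover Lemma~\ref{lem-upper-bound-of-u-m-p-in-section-4} applied to $w_p^{(2)}$ yields $w_p^{(2)}\le(\theta^{1-p}+(p-1)t)^{-1/(p-1)}\le\theta$ on $\R\times(0,\infty)$, so the absorption satisfies $(w_p^{(2)})^p\le\theta^p$ everywhere.

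\emph{Step 2 (comparison with the Burgers flow).} Both $w_p^{(2)}$ and $v_m^{(2)}$ satisfy the entropy inequalities with the common datum $u_{0,2}$, the former with source $-(w_p^{(2)})^p$ and the latter with source $0$. The finite-speed $L^1$-comparison estimate of \cite{K} (the result underlying Lemma~\ref{lem-L-1-contraction-of-solutions-u-1--and-u-2-2349}, now carrying a source term) therefore gives, for $0<t<2\delta/N$,
\[
\int_{B_{2\delta-Nt}(x_0)}\left|w_p^{(2)}(\cdot,t)-v_m^{(2)}(\cdot,t)\right|dx\le\int_0^t\!\!\int_{B_{2\delta-Ns}(x_0)}(w_p^{(2)})^p\,dx\,ds\le 4\delta\,\theta^p\,t,
\]
the last bound using $(w_p^{(2)})^p\le\theta^p$ and $\left|B_{2\delta}(x_0)\right|=4\delta$; equivalently, since $w_p^{(2)}\le v_m^{(2)}$ by \eqref{L-infty-bd}, this is a localized mass balance over the backward cone. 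For $t\in(0,\delta/N)$ we have $B_\delta(x_0)\subset B_{2\delta-Nt}(x_0)$, so combining Step~1 with the triangle inequality,
\[
\int_{B_\delta(x_0)}\left|w_p(\cdot,t)-u_0\right|dx\le 4\delta\,\theta^p\,t+\int_{B_\delta(x_0)}\left|v_m^{(2)}(\cdot,t)-u_{0,2}\right|dx.
\]

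\emph{Step 3 (passage to the limit).} Fix $t\in(0,\delta/N)$, set $p=p_i$ and let $i\to\infty$; since $w_{p_i}(\cdot,t)\to w_{\infty}(\cdot,t)$ in $L^1(B_\delta(x_0))$, $\theta^{p_i}\to0$, and $v_m^{(2)}$ is independent of $p$, the displayed inequality passes to the limit to give
\[
\int_{B_\delta(x_0)}\left|w_{\infty}(\cdot,t)-u_0\right|dx\le\int_{B_\delta(x_0)}\left|v_m^{(2)}(\cdot,t)-u_{0,2}\right|dx\qquad\forall\,t\in(0,\delta/N).
\]
As $t\to0$ the right-hand side vanishes by the initial-trace condition (ii) for the Burgers solution $v_m^{(2)}$ together with the continuity of $u_{0,2}$; because $t\mapsto w_{\infty}(\cdot,t)$ is continuous into $L^1_{loc}(\R)$ and the trace holds for $t$ outside a null set whose complement is dense, this a.e.-$t$ limit upgrades to the full limit on the left, which is precisely \eqref{eq-L-1-norm-of-difference-between-u-m-p-and-u-0-in=distribution-sense}. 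The only delicate point is the source-carrying $L^1$-comparison used in Step~2, namely justifying within the framework of \cite{K} the finite-speed estimate controlling $\left\|w_p^{(2)}(\cdot,t)-v_m^{(2)}(\cdot,t)\right\|_{L^1}$ by the space-time integral of $(w_p^{(2)})^p$; everything else is bookkeeping built on the established localization (Lemma~\ref{lem-L-1-contraction-of-solutions-u-1--and-u-2-2349}), the sharp upper bound (Lemma~\ref{lem-upper-bound-of-u-m-p-in-section-4}), and the initial trace of the Burgers flow.
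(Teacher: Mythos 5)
Your argument is correct in outline, and its first half coincides with the paper's own proof: the paper's Case 2 likewise fixes a constant $\theta<1$, replaces $u_0$ by a datum that equals $u_0$ on $B_{2\delta}(x_0)$ and lies everywhere below a constant strictly less than $1$, and uses Lemma \ref{lem-L-1-contraction-of-solutions-u-1--and-u-2-2349} (translated to $x_0$) to identify $w_p$ with the truncated-datum solution on $B_{\delta}(x_0)$ for $0<t<\delta/N$. Where you genuinely diverge is in the treatment of the sub-unit datum. The paper (its Case 1) never touches the absorption-free equation: from the weak formulation \eqref{eq-equation-for-solution-maybe-weak-concept} it gets $\bigl|\int_{\R}(w_p(x,t)-u_0(x))\eta(x)\,dx\bigr|\le C_{\eta}t$ uniformly in $p$ (both $(w_p)^m\le1$ and $(w_p)^p\le1$ when the datum is below $1$), lets $p\to\infty$, obtains a weak initial trace for $w_{\infty}$, and then upgrades weak to strong $L^1_{loc}$ convergence via a.e. convergence along subsequences and dominated convergence. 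You instead compare the truncated-datum absorption solution $w_p^{(2)}$ with the Burgers solution $v_m^{(2)}$ of \eqref{burgers-eqn0} and then quote the initial-trace condition (ii) in the definition of solution for $v_m^{(2)}$. Your route buys something real: strong $L^1$ attainment of the datum is inherited directly from Kruzhkov's notion of solution of \eqref{burgers-eqn0}, so you bypass the paper's weak-to-strong upgrade, which is its most delicate step.

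The price is the step you yourself flag, and it is a genuine (though fillable) hole: the estimate $\int_{B_{2\delta-Nt}(x_0)}|w_p^{(2)}-v_m^{(2)}|\,dx\le\int_0^t\int_{B_{2\delta-Ns}(x_0)}(w_p^{(2)})^p\,dx\,ds$ appears nowhere in the paper. Lemma \ref{lem-L-1-contraction-of-solutions-u-1--and-u-2-2349} compares two solutions of the same equation (same absorption term), not a solution of \eqref{burgers-absorption-eqn} against a solution of \eqref{burgers-eqn0}. The estimate is true and standard: Kruzhkov's doubling argument with the two sources $-u^p$ and $0$ yields $\partial_t|u_1-u_2|+\partial_x\bigl[\mbox{sign}(u_1-u_2)(u_1^m-u_2^m)\bigr]\le u_1^p$ in $\mathcal{D}'$, and integrating over the backward cone gives your bound; alternatively one can run Kato's inequality at the level of the viscous approximations \eqref{epsilon-approx-eqn} already used in the proof of Lemma \ref{lem-upper-bound-of-u-m-p-in-section-4} and let $\3\to0$. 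Either derivation must actually be written out for your proof to be complete. Two smaller remarks: the refinement $(w_p^{(2)})^p\le\theta^p$ is not needed, since the crude bound $(w_p^{(2)})^p\le1$ gives $4\delta t$, which still vanishes because $t\to0$ is taken after $p\to\infty$; and your final upgrade from a.e.-$t$ convergence to the full limit, using the continuity of $t\mapsto w_{\infty}(\cdot,t)$ in $L^1_{loc}(\R)$, is correct as written.
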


\begin{proof}
We divide the proof into two cases.\\
\textbf{Case 1.} $\left\|u_0\right\|_{L^{\infty}(\R)}\leq 1$.\\
\indent By \eqref{L-infty-bd},
\begin{equation*}
\begin{aligned}
&\left|\int_{\R}w_p(x,t)\eta(x)\,dx-\int_{\R}u_0(x)\eta(x)\,dx\right|\leq \int_{0}^{t}\int_{\R}\left[(w_p)^m|\eta_x|+(w_p)^p|\eta|\right]\,dxdt\le C_{\eta}t, \quad \forall p>1,\,\,\eta\in C^{\infty}_0(\R)\\
&\qquad \Rightarrow \qquad \left|\int_{\R}w_{\infty}(x,t)\eta(x)\,dx-\int_{\R}u_0(x)\eta(x)\,dx\right|\le C_{\eta}t,\qquad \forall\eta\in C^{\infty}_0(\R)\qquad \mbox{as $p=p_i\to\infty$}.
\end{aligned}
\end{equation*}
Then
\begin{equation}\label{eq-weak-convergence-to-u_0-from-w-infty-as-t-to-zero=0}
w_{\infty} \to u_0 \qquad \mbox{weakly in $L^1(\R)$ as $t\to 0$}.
\end{equation}
Let $\left\{t_i\right\}_{i=1}^{\infty}\subset\R^+$ be such that $t_i\to 0$ as $i\to\infty$. Then by \eqref{eq-weak-convergence-to-u_0-from-w-infty-as-t-to-zero=0}, there exists the sequence $\left\{t_i\right\}_{i=1}^{\infty}$has a subsequence which we may assume without loss of generality to be the sequence itself such that
\begin{equation}\label{eq-convergence-of-w-infty-to-u-sub-0034}
w_{\infty}(x,t_i)\to u_0(x) \qquad \mbox{a.e. $x\in\R$} \quad \mbox{as $i\to\infty$}.
\end{equation}
By \eqref{eq-convergence-of-w-infty-to-u-sub-0034} and Lebesgue Dominated Convergence Theorem,
\begin{equation*}
\left\|w_{\infty}(\cdot,t_i)-u_0\right\|_{L^{1}(B_R)}\to 0 \qquad \forall R>0 \qquad \mbox{as $i\to\infty$}.
\end{equation*}
Since the sequence $\left\{t_i\right\}_{i=1}^{\infty}$ is arbitrary, \eqref{eq-L-1-norm-of-difference-between-u-m-p-and-u-0-in=distribution-sense} follows.\\
\textbf{Case 2.} $u_0\in L^{\infty}(\R)$.\\
\indent Let $\theta=\max_{|x-x_0|<\delta}u_0(x)$. Then, $\theta<1$. We now choose a smooth non-negative function $v_0$ on $\R$ such that 
\begin{equation*}
v_0(x)=u_0(x), \quad  \forall x\in B_{2\delta}(x_0) \qquad  \mbox{and} \qquad  v_0(x)\leq \frac{\theta+1}{2}, \quad \forall x\in\R.
\end{equation*} 
Let $N=m\left\|u_0\right\|_{L^{\infty}(\R)}^{m-1}$ and $v_p$ be the solution of \eqref{burgers-absorption-eqn} with initial value $v_0$. By the same argument as before, the sequence $\left\{v_{p_i}\right\}_{i=1}^{\infty}$ has a subsequence which we may assume without loss of generality to be the sequence itself that converges in $C\left([t_0,T);L^1_{loc}(\R)\right)$ for any $T>t_0>0$ to some function $v_{\infty}\in C\left((0,\infty);L^1_{loc}(\R)\right)\cap L^{\infty}(\R\times(0,\infty))$ as $i\to\infty$. Then, By Lemma \ref{lem-L-1-contraction-of-solutions-u-1--and-u-2-2349},
\begin{align}
&\left\|w_p(\cdot,t)-v_p(\cdot,t)\right\|_{L^{1}(B_{\delta}(x_0))}=\left\|w_p(\cdot,0)-v_p(\cdot,0)\right\|_{L^{1}(B_{2\delta}(x_0))}=0 \qquad \forall 0<t<\frac{\delta}{N}, \,\, p>1\notag\\
&\qquad \Rightarrow \qquad \left\|w_{\infty}(\cdot,t)-v_{\infty}(\cdot,t)\right\|_{L^{1}(B_{\delta}(x_0))}=0 \qquad \forall 0<t<\frac{\delta}{N}\quad \mbox{as $p=p_i\to\infty$}\notag\\
&\qquad \Rightarrow \qquad w_{\infty}(x,t)=v_{\infty}(x,t), \qquad \forall 0<t<\frac{\delta}{N},\,\,|x-x_0|\leq \delta. 
\label{eq-absolute-difference-between-w-infty-and-v-infty-as0limit-t-to-zero-and-p-to-infty}
\end{align}
Therefore, by \eqref{eq-absolute-difference-between-w-infty-and-v-infty-as0limit-t-to-zero-and-p-to-infty} and \textbf{Case 1},
\begin{equation*}
\left\|w_{\infty}(\cdot,t)-u_0\right\|_{L^{1}(B_{\delta}(x_0))}=\left\|v_{\infty}(\cdot,t)-u_0\right\|_{L^{1}(B_{\delta}(x_0))}\to 0 \qquad \mbox{as $t\to 0$}
\end{equation*}
and \eqref{eq-L-1-norm-of-difference-between-u-m-p-and-u-0-in=distribution-sense} follows.
\end{proof}

\begin{lemma}\label{p-infty-limit-initial-value-lem}
Let 
\begin{equation*}
w_{\infty}^0(x)=\min\left(u_0(x),1\right)\qquad \forall x\in\R.
\end{equation*} 
Then, 
\begin{equation}\label{eq-initial-function-of-w-infty-by-u-sub-0-and-1-in-two-parts}
\lim_{t\to 0}\int_{B_R}\left|w_{\infty}(x,t)-w_{\infty}^0(x)\right|\,dx=0 \qquad \forall R>0.
\end{equation}
\end{lemma}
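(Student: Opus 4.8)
The plan is to prove the statement first for continuous initial data and then remove the continuity assumption by an $L^1$-approximation argument based on the contraction estimate \eqref{eq-L-1-contraction-fo-second-order-PDE-23-with-differences-of-two-sols}. Throughout I would treat the regions $\{u_0<1\}$ and $\{u_0\ge1\}$ separately, since on the former $\min(u_0,1)=u_0$ while on the latter $\min(u_0,1)=1$, and these two regimes require different tools.

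Assume first $u_0\in C(\R)$. On the open set $\{u_0<1\}$ I would cover each point $x_0$ by a ball $B_\delta(x_0)$ on whose closure $u_0<1$ and apply Lemma~\ref{lem-inital-convergence-where-u-0-is-=strictly-less-than-1}; a finite subcover of the compact set $\{x\in\overline{B_R}:u_0(x)\le1-\eta\}$, together with the bound $|w_\infty-u_0|\le1+\|u_0\|_{L^\infty}$, then yields $\int_{B_R\cap\{u_0<1\}}|w_\infty(\cdot,t)-u_0|\,dx\to0$, after letting $\eta\to0$ to absorb the layer $\{1-\eta<u_0<1\}$ whose measure tends to $0$. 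For the region $\{u_0\ge1\}$, where \eqref{w-infty-bd} only supplies the upper bound $w_\infty\le1$, the idea is to manufacture a matching lower bound by comparison. For $c\in(0,1)$ set $\underline u_0^{\,c}:=\min(u_0,c)$, a continuous datum with $\underline u_0^{\,c}\le u_0$ and $\underline u_0^{\,c}<1$ everywhere. Applying the parabolic maximum principle to the viscous approximations \eqref{epsilon-approx-eqn} (as in the proof of Lemma~\ref{lem-upper-bound-of-u-m-p-in-section-4}) and letting $\3\to0$ gives the comparison $\underline w_p^{\,c}\le w_p$, hence $\underline w_\infty^{\,c}\le w_\infty\le1$ in the limit $p=p_i\to\infty$. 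Since $\underline u_0^{\,c}<1$ everywhere, the previous paragraph applies to $\underline w_\infty^{\,c}$ and gives $\underline w_\infty^{\,c}(\cdot,t)\to\underline u_0^{\,c}=c$ in $L^1(B_R\cap\{u_0\ge1\})$, so that
\begin{equation*}
\limsup_{t\to0}\int_{B_R\cap\{u_0\ge1\}}|w_\infty(\cdot,t)-1|\,dx=\limsup_{t\to0}\int_{B_R\cap\{u_0\ge1\}}(1-w_\infty)\,dx\le(1-c)\,|B_R\cap\{u_0\ge1\}|,
\end{equation*}
and letting $c\to1^-$ shows this region contributes nothing. Adding the two regions establishes \eqref{eq-initial-function-of-w-infty-by-u-sub-0-and-1-in-two-parts} for continuous $u_0$.

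To pass to general $0\le u_0\in L^\infty(\R)$, I fix $R>0$ and $\3>0$, set $N=m\|u_0\|_{L^\infty}^{m-1}$ and $R'=R+N$, and mollify $u_0$ to obtain $\tilde u_0\in C(\R)$ with $0\le\tilde u_0$, $\|\tilde u_0\|_{L^\infty}\le\|u_0\|_{L^\infty}$ and $\|\tilde u_0-u_0\|_{L^1(B_{R'})}<\3$. Passing to a common subsequence, let $\tilde w_\infty$ be the $p$-limit of the solutions with datum $\tilde u_0$. Crucially the constant $N$ in \eqref{eq-L-1-contraction-fo-second-order-PDE-23-with-differences-of-two-sols} is independent of $p$, so that estimate survives the limit $p=p_i\to\infty$ and gives, for all sufficiently small $t>0$,
\begin{equation*}
\|w_\infty(\cdot,t)-\tilde w_\infty(\cdot,t)\|_{L^1(B_R)}\le\|w_\infty(\cdot,t)-\tilde w_\infty(\cdot,t)\|_{L^1(B_{R'-Nt})}\le\|u_0-\tilde u_0\|_{L^1(B_{R'})}<\3,
\end{equation*}
while the truncation $s\mapsto\min(s,1)$ is $1$-Lipschitz, whence $\|\min(u_0,1)-\min(\tilde u_0,1)\|_{L^1(B_R)}<\3$. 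Combining these two facts with the continuous case already proved for $\tilde u_0$ and the triangle inequality yields $\limsup_{t\to0}\|w_\infty(\cdot,t)-\min(u_0,1)\|_{L^1(B_R)}\le2\3$, and \eqref{eq-initial-function-of-w-infty-by-u-sub-0-and-1-in-two-parts} follows since $\3>0$ is arbitrary.

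I expect the main obstacle to be the region $\{u_0\ge1\}$: Lemma~\ref{lem-inital-convergence-where-u-0-is-=strictly-less-than-1} gives no information there, so the lower bound $w_\infty\to1$ must be produced from scratch. The device above—comparing with the truncated datum $\min(u_0,c)$, to which the $\{u_0<1\}$ analysis \emph{does} apply, and then sending $c\to1^-$—is the crux, and it rests on the comparison principle, which I would justify through the viscous approximations \eqref{epsilon-approx-eqn} rather than assume. A secondary but essential technical point is the uniformity in $p$ of the contraction constant $N$ in \eqref{eq-L-1-contraction-fo-second-order-PDE-23-with-differences-of-two-sols}, which is precisely what makes the reduction to continuous data legitimate after the singular limit $p\to\infty$.
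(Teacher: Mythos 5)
Your proposal is correct and takes essentially the same route as the paper's own proof: comparison with the truncated datum $\min(u_0,c)$, $c\to 1^-$ (the paper uses $\min(u_0,1-\varepsilon)$, $\varepsilon\to 0$) to force $w_\infty\to 1$ on $\{u_0\ge 1\}$, Lemma~\ref{lem-inital-convergence-where-u-0-is-=strictly-less-than-1} on $\{u_0<1\}$, and the $p$-uniform $L^1$-contraction of Lemma~\ref{lem-L-1-contraction-of-solutions-u-1--and-u-2-2349} to reduce general $L^\infty$ data to continuous data. The remaining differences are cosmetic: you use a finite subcover with direct integral estimates and a mollified approximant, whereas the paper uses a Lindel\"of covering with a.e.\ convergence, dominated convergence, and an abstract smooth approximating sequence.
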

\begin{proof}
We divide the proof into 2 cases. \\
\textbf{Case 1.} $u_0\in C(\R)\cap L^{\infty}(\R)$.\\
\indent Since $\left\{x:u_0(x)<1\right\}$ is open, by the Lindelof theorem \cite{R}, $\left\{x:u_0(x)<1\right\}=\bigcup_{j=1}^{\infty}B_{2\delta_j}(x_j)$ for some $x_j\in \left\{x:u_0(x)<1\right\}$ and $\delta_j>0$, $j=1,2,\cdots$. By Lemma \ref{lem-inital-convergence-where-u-0-is-=strictly-less-than-1} for any $j\in\Z^+$ \eqref{eq-L-1-norm-of-difference-between-u-m-p-and-u-0-in=distribution-sense} holds for $\delta=\delta_j$. \\
\indent Let $\3>0$ and $u_{0,\3}(x)=\min(u_0(x),1-\3)$. For any $m>1$, $p>1$, let $u_{m,p,\3}$ be the solutions of \eqref{burgers-absorption-eqn} in $\R\times(0,\infty)$ with initial value $u_{0,\3}$. By the same argument as before $u_{m,p,\3}$ satisfies \eqref{eq-bound-of-u-m-p-to-p---1-by-somterm-with-p-3}. Moreover the sequence $\{u_{m,p_i,\3}\}_{i=1}^{\infty}$ is equi-continuous  in $C([t_0,T];L^{1}_{loc}(\R))$ for any $T>t_0>0$ and has a subsequence which we may assume without loss of  generality to be the sequence itself that converges  in $C([t_0,T];L^{1}_{loc}(\R))$ for any $T>t_0>0$ to some function $w_{\infty,\3}\in C((0,\infty);L^{1}_{loc}(\R))$ as $i\to\infty$ which satisfies
\begin{equation}\label{eq-comparison-betwen-w-infty-epsilon-and-w-infty-zoer}
\begin{aligned}
0\le w_{\infty,\3}(x,t)\le 1\quad\mbox{ in }\R\times[0,\infty)
\end{aligned}
\end{equation}
Since $u_{0,\3}<1$ in $\R$, by the proof of Lemma \ref{lem-inital-convergence-where-u-0-is-=strictly-less-than-1}, 
\begin{equation}\label{eq-initial-condition-or-value-of-w-infty-epsilon-234}
w_{\infty,\3}(x,t)\to u_{0,\3}(x) \qquad \mbox{in $L_{loc}^1(\R)$} \quad \mbox{as $t\to 0$}.
\end{equation}
Since $u_{0,\3}\le u_0$, by the construction of solutions of \eqref{burgers-absorption-eqn} in \cite{K},
\begin{equation}\label{w-infty-w-infty-epsilon-compare}
u_{m,p,\3}\le w_p\quad\mbox{ in }\R\times (0,\infty)\quad\Rightarrow\quad w_{\infty,\3}\le w_{\infty}\quad\mbox{ in }\R\times (0,\infty)\quad\mbox{ as }p=p_i, i\to\infty.
\end{equation}
By \eqref{w-infty-bd}, \eqref{eq-initial-condition-or-value-of-w-infty-epsilon-234} and \eqref{w-infty-w-infty-epsilon-compare},
\begin{align}
&1\ge\limsup_{t\to 0} w_{\infty}(x,t)\ge \liminf_{t\to 0}w_{\infty}(x,t)\ge \lim_{t\to 0}w_{\infty,\3}(x,t)=1-\3 \qquad \mbox{a.e. $x\in\{x:u_0(x)\geq 1\}$}\notag\\
\Rightarrow\quad&\lim_{t\to 0}w_{\infty}(x,t)=1=w_{\infty}^0(x)  \qquad\qquad\qquad \mbox{a.e. $x\in\{x:u_0(x)\geq 1\}$} \qquad \mbox{as $\3\to 0$}.
\label{eq-convetgence-of-w-sub=sinfty-zero-as-to-to-zoer}
\end{align}
Since \eqref{eq-L-1-norm-of-difference-between-u-m-p-and-u-0-in=distribution-sense} holds for $\delta=\delta_j$, $j\in\Z^+$, any sequence $\left\{t_i\right\}_{i=1}^{\infty}$, $t_i\to 0$ as $i\to\infty$, will have a subsequence which we may assume without loss of  generality to be the sequence itself such that
\begin{equation}\label{eq-convergence-of-v-intfyt-tou-sub-0-al-everywhere-1}
w_{\infty}(x,t_i)\to u_0(x) \qquad \mbox{a.e. $x\in\left\{x:u_0(x)<1\right\}$}.
\end{equation}
Hence, by \eqref{eq-convetgence-of-w-sub=sinfty-zero-as-to-to-zoer}, \eqref{eq-convergence-of-v-intfyt-tou-sub-0-al-everywhere-1} and the Lebesgue Dominated Convergence Theorem,
\begin{equation*}
\lim_{i\to\infty}\int_{|x|<R}\left|w_{\infty}(x,t_i)-w_{\infty}^0(x)\right|\,dx=0 \qquad \forall R>0.
\end{equation*}
Since the sequence $\left\{t_i\right\}_{i=1}^{\infty}$ is arbitrary, \eqref{eq-initial-function-of-w-infty-by-u-sub-0-and-1-in-two-parts} follows.\\
\textbf{Case 2.} $u_0\in L^{\infty}(\R)$.\\
\indent We choose a sequence of functions $\{u_{0,j}\}_{j=1}^{\infty}\subset C^{\infty}(\R)$ such that
\begin{equation}\label{eq-cases-aligned-seq-of-u-0-j-in-L-infty}
\begin{cases}
\begin{aligned}
&\|u_{0,j}-u_0\|_{L^{1}(B_R)}\to 0 \qquad \qquad \mbox{as $j\to\infty$},\quad \forall R>0\\
&\qquad u_{0,j}(x)\to u_0(x) \qquad \qquad \mbox{ a.e. }x\in\R\quad\mbox{  as }j\to\infty\\
&\|u_{0,j}\|_{L^{\infty}(\R)}\leq \|u_0\|_{L^{\infty}(\R)}+\frac{1}{j} \quad \forall j\in \Z^+.
\end{aligned}
\end{cases}
\end{equation}
For any $m>1$, $p>1$, let $u_{m,p,j}$ be the solutions of \eqref{burgers-absorption-eqn} with initial value $u_{0,j}$. By the same argument as before for any $j\in\Z^+$ the sequence $\{u_{m,p_i,j}\}_{i=1}^{\infty}$ has a subsequence which we may assume without loss of  generality to be the sequence itself that converges  in $C([t_0,T];L^1_{loc}(\R))$  to some function $w_{\infty,j}\in C((0,\infty);L^{1}_{loc}(\R))$, $0\le w_{\infty,j}\le 1$, for any $T>t_0>0$ as $i\to\infty$. Let
\begin{equation*}
w_{\infty,j}^0(x)=\min\left(u_{0,j}(x),1\right), \quad \forall j\in\Z^+.
\end{equation*} 
By case 1, 
\begin{equation}\label{eq-initial-function-of-w-infty-j-by-u-sub-0-and-1-in-two-parts}
\lim_{t\to 0}\int_{B_R}\left|w_{\infty,j}(x,t)-w_{\infty,j}^0(x)\right|\,dx=0 \qquad \forall R>0, j\in\Z^+.
\end{equation}
By \eqref{eq-cases-aligned-seq-of-u-0-j-in-L-infty} and Lemma \ref{lem-L-1-contraction-of-solutions-u-1--and-u-2-2349} there exists a constant $N>0$ such that 
\begin{equation}\label{eq-applying-L-1-contraction-for-u-m-p-j-epsilon-and-u-m-p-epsilon-298}
\int_{B_{R-Nt}}|u_{m,p,j}(x,t)-u_{m,p}(x,t)|\,dx\le\int_{B_R}|u_{0,j}(x)-u_0(x)|\,dx\quad\forall 0<t<R/N,R>0,j\in\Z^+,p>1.
\end{equation}
Putting $p=p_i$ in \eqref{eq-applying-L-1-contraction-for-u-m-p-j-epsilon-and-u-m-p-epsilon-298} and letting $i\to\infty$, 
\begin{equation}\label{eq-applying-L-1-contraction-for-w-infty-j-and-w-infty-298776}
\int_{B_{R-Nt}}|w_{\infty,j}(x,t)-w_{\infty}(x,t)|\,dx\le\int_{B_R}|u_{0,j}(x)-u_0(x)|\,dx\qquad\forall 0<t<R/N,R>0,j\in\Z^+.
\end{equation}
By \eqref{eq-applying-L-1-contraction-for-w-infty-j-and-w-infty-298776},
\begin{equation}\label{eq-split-of-differnecse-of-w-infty-and-w-infty-0-with-w-infty-0-j}
\begin{aligned}
&\int_{B_{R-Nt}}|w_{\infty}(x,t)-w_{\infty}^0(x)|\,dx\\
&\leq \int_{B_{R-Nt}}|w_{\infty}(x,t)-w_{\infty,j}(x,t)|\,dx+\int_{B_{R-Nt}}|w_{\infty,j}(x,t)-w_{\infty,j}^0(x)|\,dx +\int_{B_{R-Nt}}|w_{\infty,j}^0(x)-w_{\infty}^0(x)|\,dx\\
&\le\int_{B_R}|u_{0,j}(x)-u_0(x)|\,dx+\int_{B_R}|w_{\infty,j}(x,t)-w_{\infty,j}^0(x)|\,dx +\int_{B_R}|w_{\infty,j}^0(x)-w_{\infty}^0(x)|\,dx
\end{aligned}
\end{equation}
for any $0<t<R/N$, $R>0$ and $j\in\Z^+$.
Letting first $t\to 0$  and then $j\to\infty$ in \eqref{eq-split-of-differnecse-of-w-infty-and-w-infty-0-with-w-infty-0-j}, by \eqref{eq-cases-aligned-seq-of-u-0-j-in-L-infty} and \eqref{eq-initial-function-of-w-infty-j-by-u-sub-0-and-1-in-two-parts},  \eqref{eq-initial-function-of-w-infty-by-u-sub-0-and-1-in-two-parts} follows.
\end{proof}

We will now complete the proof of Theorem \ref{main-thm2}.

\begin{proof}[\textbf{Proof of Theorem \ref{main-thm2}}]
 By Lemma \ref{p-infty-limit-integral-eqn-lem} and Lemma \ref{p-infty-limit-initial-value-lem}, $w_{\infty}$ is the unique solution of  \eqref{burgers-eqn}. Since the sequence $\{p_i\}_{i=\infty}^{\infty}$ is arbitrary, $w_p$ converges to $w_{\infty}$ in $C([t_0,T];L_{loc}^1(\R))$ for any $T>t_0>0$ as $p\to\infty$ and Theorem \ref{main-thm2} follows.
\end{proof}

\section{Interchange of limits}
\setcounter{equation}{0}
\setcounter{thm}{0}

This section will be devoted to proving Theorem \ref{main-thm3}.

\begin{proof}[\textbf{Proof of Theorem \ref{main-thm3}}]
Note that (i) follows directly by Theorem \ref{main-thm2} and the result of \cite{X}. Hence we only need to prove (ii). By Theorem \ref{main-thm1}, $u_{\infty,p}$ satisfies \eqref{p-power-ode}
with initial value $u^0_{\infty}$ that satisfies \eqref{p-limit-initial-value} for some function $0\le\psi\in L^1(\R)\cap L^{\infty}(\R)$ which satisfies \eqref{psi=0-eqn} and $0\le u_{\infty,p}\leq 1$ on $\R\times (0,\infty)$. Let $\{p_i\}_{i=1}^{\infty}\subset\Z^+$ be such that $p_i\to\infty$ as $i\to\infty$. Since $0\leq u_{\infty,p}\leq 1$, the sequence $\{u_{\infty,p_i}\}_{i=1}^{\infty}$ has a subsequence which we may assume without loss of generality to be the sequence itself such that $u_{\infty,p_i}$ converges weakly in $L^1(\R\times(0,\infty))$ to some function $v_2$  as $i\to\infty$.

On the other hand since $u_{\infty,p}$ satisfies \eqref{p-power-ode}, 
\begin{align*}
&u_{\infty,p}(x,t)=\frac{u^0_{\infty}(x)}{\left((p-1)tu^0_{\infty}(x)^{p-1}+1\right)^{\frac{1}{p-1}}}\quad\mbox{ a.e. }(x,t)\in\R\times (0,\infty)\quad\forall p>1\\
\Rightarrow\quad&v_2(x,t)=\lim_{p\to\infty}u_{\infty,p}(x,t)=u^0_{\infty}(x)\qquad \qquad \mbox{ a.e. }(x,t)\in\R\times (0,\infty)
\end{align*}
and Theorem \ref{main-thm3} follows.
\end{proof}

\end{document}